\documentclass{RSMUP}
\address[rodolphe.richard@normalesup.org]{Rodolphe \textsc{RICHARD}}
%\address[e-mail address]{Author's name, Department, University, PO Box or Street, City, Country}

\newif\ifXELATEX
\XELATEXfalse

\usepackage[english,frenchb]{babel}
\usepackage{mathrsfs}
\usepackage{pdfpages}
\ifXELATEX
	\usepackage{fontspec} % Font selection for XeLaTeX; see fontspec.pdf for documentation
	\defaultfontfeatures{Mapping=tex-text} % to support TeX conventions like ``---''
	\usepackage{xunicode} % Unicode support for LaTeX character names (accents, European chars, etc)
	\usepackage{xltxtra} % Extra customizations for XeLaTeX
\else %PDFLaTeX
	\usepackage[T1]{fontenc} % output font encoding that has support for the accented characters used by the most widespread European languages
	\usepackage[utf8]{inputenc} % you can directly input accented and other characters.
\fi

\usepackage[colorlinks=true,citecolor=blue]{hyperref}

\received[\today]{\today}

% Uncomment the next line if you would like your equations to be numbered according to sections:
%\numberwithin{equation}{section}
%\numberwithin{figure}{section}
%\numberwithin{table}{section}

\newtheorem{theorem}{Theorem}
\newtheorem{corollary}[theorem]{Corollary}
\newtheorem{lemma}[theorem]{Lemma}
\newtheorem{proposition}[theorem]{Proposition}

\theoremstyle{definition}

\newtheorem{example}[theorem]{Example}
\newtheorem{remark}[theorem]{Remark}

%\newtheorem{theo}{Théorème}
%\newtheorem{coro}{Corollaire}
%\newtheorem{prop}{Proposition}
%\newtheorem{lemme}{Lemme}

%% Create a new command named "\eps" as abbreviation for \varepsilon

\newcommand{\QQ}{{\mathbf{Q}}}
\newcommand{\eps}{\varepsilon}
\newcommand{\abs}[1]{{\left|{#1}\right|}}
\newcommand{\Nm}[1]{{\left\|{#1}\right\|}}
\newcommand{\ceil}[1]{{\left\lceil{#1}\right\rceil}}
\newcommand{\floor}[1]{{\left\lfloor{#1}\right\rfloor}}
\newcommand{\pgcd}[2]{\mathrm{pgcd}\left(#1,#2\right)}

% -- New Operators --
%Use the command \DeclareMathOperator to define a new log-like function in LaTeX;
%see Section 2.4 below

\newcommand{\RoC}{\texttt{RoC}}

\title[On~$\pi$-exponentials~II]{On~$\pi$-exponentials~II:\\
Closed formula for the index%;\\ Factorisation and Comparison
}

\author[Rodolphe Richard]{Rodolphe Richard}

%\author[First author's name \ --\  Second author's name]{First author's name\thanks{The author is grateful to
%the Max Planck Institute (Bonn) for hospitality during the writing of
%this paper.} \ --\ Second author's name\thanks{The author is grateful to
%IHES (Bures-sur-Yvette) for hospitality during the writing of
%this paper.}}

\makeatletter
\@addtoreset{paragraph}{section}
\renewcommand{\theparagraph}{\textbf{\@arabic\c@section.\@arabic\c@paragraph}}
\renewcommand{\thesubparagraph}{\textbf{\@arabic\c@section.\@arabic\c@paragraph.\@arabic\c@subparagraph}}
\newcommand{\Odagger}{
\mathcal{O}^\dagger
%%K\{\{T\}\}
}
\makeatother

\begin{document}

\maketitle

\selectlanguage{french}
\begin{abstract} Cet article poursuit la série, entamée avec~\cite{piexp1}, dédiée aux~$\pi$-exponentielles de Pulita
et aux équations différentielles $p$-adique de rang $1$ polynomiales dans une extension ultramétrique du corps des
nombres $p$-adiques. Nous complétons~\cite{piexp1} avec une formule close pour l’indice. Le cas particulier «~$p$-typique~» 
résoud un problème étudié dans~\cite{Morofushi}. Nous répondons également à une question~\cite[§2.4]{Robba} de Robba
sur la comparaison de la cohmologie rationnelle vers celle de Dwork. Nous indiquons même une procédure pour allier aux cas
où il n’y a pas isomorphisme. Nous établissons en passant une caractérisation computationelle des équations solubles à équivalence 
près sur l’algèbre dague. Un appendice détermine la complexité polynomiale de l’algorithme dérivé.
\end{abstract}

\selectlanguage{english}
\begin{abstract} This article pursue the series, initiated by~\cite{piexp1}, dedicated to Pulita’s~$\pi$-exponentials and
$p$-adic differential equation of rank one with coefficients a polynomial in a ultrametric extension of the field of $p$-adic
numbers.
We complement~\cite{piexp1} with a closed formula for the index. The “$p$-typical” particular case answers one problem
studied  in~\cite{Morofushi}. We also answer a question~\cite[§2.4]{Robba} of Robba on the comparison from rational cohomology
toward Dwork cohomology. We eve indicate a procedure to palliate the lack of isomorphy of this comparison. We establish by 
the way a characterisation of soluble equations up to equivalence on the dagger algebra. An appendix determine the polynomial 
complexity of the
bderived algorithm.
\end{abstract}

\setcounter{secnumdepth}{5}
\setcounter{tocdepth}{1}

\begin{classification}
12H25; 13F35; 14G20
\end{classification}

\begin{keywords}
$\pi$-exponentials; $p$-adic differential equations: Kernel of Frobenius endomorphism of Witt vectors over a $p$-adic ring; radius of convergence function; algorithm; index formula; Dwork cohomology; Rationnal cohomology; Boyarsky principle; $p$-adic irregularity; Swan conductor.
\end{keywords}

\tableofcontents

\paragraph{General notations.}
Fix a prime~$p$ and a ultrametric field extension~$K$ of~$\QQ_p$, and write~$\abs{-}$ for its absolute value. 
%We do not need~$K$ to be topologically complete.
 An element~$x$ of~$K$ is a (ultrametric) \emph{integer} of~$K$
if~$\abs{x}\leq1$. We denote~$R$ the ring of integers of~$K$, and~$\kappa$ its residue field.
$$\text{(characteristic $0$)}\qquad K \hookleftarrow R \twoheadrightarrow \kappa\qquad\text{(characteristic $p$)}$$

\section{Results} 
\paragraph{Problem.}
For any~$L(T)$ in~$K[T]$ we consider the differential equation
\begin{equation}\label{diffeq}
y^\prime=L(T)·y 
\end{equation}
Let~$P(T)$ be given by:~$P(0)=0$ and~$P^\prime(T)=L(T)$, so that the series~
\begin{equation}\label{formalsol}
e(T)=\exp(P(T))
\end{equation}
is defined  in~$K[[T]]$ and is a solution of~$\eqref{diffeq}$. 

We may refer indifferently to a differential equation like~\eqref{diffeq} through the 
equation~\eqref{diffeq} itself, the corresponding polynomial~$P(T)$, or the corresponding series~$e(T)$.

\subparagraph{}
Denote~$\Odagger\subseteq K[[T]]$ the sub-algebra of “overconvergent” series: convergent series
with radius of convergence~$>1$. We consider~\eqref{diffeq} as a differential equation
over~$\Odagger$.
%endowed with the standard derivation of series.
 One says that~\eqref{diffeq}
\begin{itemize}
\item is~\emph{trivial} if~$e(T)$ has radius~$>1$, and, more generally,
\item is~\emph{soluble} if~$e(T)$ has radius~$\geq 1$ (radius~$1$ is included.)
\end{itemize}
\subparagraph{}\label{paraequiv}
 Given two differential equations such as~\eqref{diffeq}, with corresponding series~$e_1(T)$ and~$e_2(T)$,
they are \emph{equivalent} if the identity~$e_1\cdot\Odagger =e_1\cdot\Odagger $ holds. Equivalently, the series~$e_1(T)/e_2(T)$,
(which corresponds to the difference of the equations,)  is in~$\Odagger$: its radius of convergence is~$>1$.
\subparagraph{}
We are concerned here with the computation of numerical invariant, under equivalence~\ref{paraequiv}, associated with soluble~\eqref{diffeq}: its \emph{index}~$\chi\in~\mathbf{Z}$.
(cf.~§\ref{sectionindex}) Equivalent invariants are the \emph{$p$-adic irregularity} of~\cite[§2.4]{Pulita}, the \emph{first slope} in~\cite[Definition~2.6]{Christol}, the \emph{Swan conductor} (\cite[Théorème~1.4.4.~4.]{Pulita}.)
\begin{remark} In principle, the determination of the radius of convergence function from~\cite[§5~(23)]{piexp1} allows us to infer quite directly
the slopes and then the index. Nevertheless, formula~\cite[§5~(23)]{piexp1} is computationally more involved than~\cite[Théorème~3]{piexp1}. We will obtain here a computationally more direct approach, yielding a more satisfying answer regarding the applications~\ref{appliexpsums} and~\ref{applicomparison}.
\end{remark}
\paragraph{Some notations from~\cite{piexp1}.} Fix an integer~$D\geq\deg(P)$ and write
\begin{equation}\label{indices}
 d=\floor{\log_p(D)}\text{, and }d_i=\floor{\log_p(D/i)}\text{ for }1\leq i\leq D.
\end{equation}
We assume that~$K$ has a primitive root of unity~$\zeta$ of order~$p^{d+1}$,
and denote
\begin{equation}\label{uniformizers}
\pi_i=\zeta^{p^{d-i}}-1\text{ for }1\leq i\leq d.
\end{equation}
(uniformisers of a tower~$\QQ_p(\pi_0)\subseteq\ldots\subseteq\QQ_p(\pi_d)$
of ramified cyclotomic extensions.)

Write~$P(T)$ as~$\sum_{i=1}^{D} a_i\cdot T^i$, let
\begin{equation}\label{Formulatilde}
\begin{split}
\tilde{P}(T)
& = \sum_{i=1}^{D} \left.a_i\cdot T^i\right/\pi_i \\
\tilde{e}(T)
& =  \exp(\tilde{P}(T))\pmod{\left(T^{D+1}\right)}.
\end{split}
\end{equation}
\begin{remark} The integers~\eqref{indices} are the ones which describe the decomposition of the ring of truncated \emph{universal} Witt vectors of length~$D$ into products of rings of~$p$-typical Witt vectors, of lengths the~$d_i$. (cf.~\cite[§2.6]{piexp1} and~§\ref{ptypdecomp}) The uniformisers~$\pi_i$, and more general ones, comes from the work~\cite{Pulita} of Pulita. These ones were already found in~\cite{Matsuda}. The appendix~\cite[§C]{piexp1} applies here: everything proceeds without modification with the more general~$\pi_i$ of Pulita. %Fromula~\eqref{Formulatilde} can be seen as one instance of Cartier duality.

\end{remark}
%and consider~$\tilde{e}(T)$ as a polynomial of degree~$\leq D$, 
%so that we can actually speak about the \emph{degree} of~$\tilde{e}(T)$.

\paragraph{Some results of~\cite{piexp1}.} This gathers what we need from~\cite{piexp1}.

\begin{theorem}[{\cite{piexp1}}]\label{thmarticle}
The following are equivalent (with notations above):
\begin{enumerate}
\item the radius of convergence is~$\geq 1$ (resp.~$>1$);\label{condition1}
\item the coefficients of~$e(T)$ are integers (resp. are eventually divisible by~$\pi_0$); \label{condition2}
\item the coefficients of~$\tilde{e}(T)$ are integers (resp.~$\tilde{e}(T)$ reduces to~$1$ in~$\kappa[T]/(T^{D+1})$.) \label{condition3}
\end{enumerate}
\end{theorem}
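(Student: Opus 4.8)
The plan is to route the stated equivalences through two independent bridges: the arithmetic normalisation \ref{condition2}$\Leftrightarrow$\ref{condition3}, comparing $e(T)$ with its rescaled companion $\tilde e(T)$ of \eqref{Formulatilde}, and the analytic comparison \ref{condition1}$\Leftrightarrow$\ref{condition2}, relating the radius of convergence to integrality of the coefficients. In both bridges the unparenthesised (\emph{soluble}) and the parenthesised (\emph{trivial}) assertions run in parallel, the latter being the former with one extra unit of $\pi_0$-adic room; so I would establish each bridge in its soluble form and obtain the trivial form by carrying that extra unit of valuation through the same estimates.

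For \ref{condition2}$\Leftrightarrow$\ref{condition3} I would work along the $p$-typical decomposition recalled in \cite[§2.6]{piexp1}. Grouping the monomials of $P$ by the prime-to-$p$ part $m$ of their degree, $e(T)=\exp(\sum_i a_i T^i)$ factors, over such $m$, as a product of $p$-typical exponentials $\exp\!\big(\sum_{j\geq0} a_{mp^j}T^{mp^j}\big)$. The integrality of one such factor is \emph{not} a termwise condition: as the Artin--Hasse series $\exp(\sum_j T^{p^j}/p^j)$ already shows, a coefficient of high Frobenius level may be far from integral and yet be absorbed by the lower ones. The correct criterion is a single Dwork--Dieudonné congruence, namely that $(a_m,a_{mp},a_{mp^2},\dots)$ be the ghost vector of an integral Witt vector. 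The whole role of the uniformisers \eqref{uniformizers}, whose valuations $v(\pi_j)=1/(p^{j}(p-1))$ decrease with the level $j$ (the $p$-adic valuation of the degree), is to implement Pulita's $\pi$-exponential change of coordinates: dividing $a_{mp^j}$ by $\pi_j$ trades the Witt-integrality of the ghost vector of $e(T)$ for the coordinatewise integrality of the matching factor of $\tilde e(T)$, the $\pi_j$ having soaked up the Artin--Hasse denominators. Truncating modulo $T^{D+1}$ discards nothing, since $\deg P\leq D$ and the lengths $d_i$ of \eqref{indices} are exactly those reaching degree $\leq D$.

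For \ref{condition1}$\Leftrightarrow$\ref{condition2}, the implication \ref{condition2}$\Rightarrow$\ref{condition1} is elementary: if the coefficients $c_n$ of $e(T)$ satisfy $\abs{c_n}\leq1$ then $\limsup_n\abs{c_n}^{1/n}\leq1$ and the radius is $\geq1$. For the converse I would exploit that $e(T)=\exp(P(T))$ with $P$ a \emph{polynomial}, via the recursion $n\,c_n=\sum_{i\geq1} i\,a_i\,c_{n-i}$ coming from $e^\prime=P^\prime e$ together with the valuation $v(n!)=(n-s_p(n))/(p-1)$, where $s_p(n)$ is the sum of the base-$p$ digits of $n$. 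These force the radius to be $\geq1$ exactly when no coefficient leaves $R$; concretely this is the explicit radius-of-convergence computation of \cite[Théorème~3]{piexp1} (and \cite[§5~(23)]{piexp1}), which I would invoke rather than reprove, thereby tethering \ref{condition1} to \ref{condition3} once the previous bridge is in place.

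The hard part will be the boundary case inside \ref{condition1}$\Rightarrow$\ref{condition2}. Radius $\geq1$ does \emph{not} force integral coefficients for an arbitrary power series, so one must genuinely use the exponential-of-a-polynomial shape; and the soluble/trivial dichotomy is the most delicate point, because ``eventually divisible by $\pi_0$'' yields only radius $\geq1$ and not $>1$ by itself. What rescues it is a rigidity of the slope sequence --- the Newton polygon of $\exp P$ admits no slow, sub-unit decay, the slopes being locked to the $\pi_j$-adic data --- which is again a consequence of the Frobenius structure underlying \ref{condition3}. I therefore expect the genuine work to concentrate in the Witt-theoretic calibration of the $\pi_i$ that makes \ref{condition2}$\Leftrightarrow$\ref{condition3} exact, with the analytic equivalence \ref{condition1}$\Leftrightarrow$\ref{condition2} following once that calibration is secured.
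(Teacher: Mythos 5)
The paper does not reprove this statement at all: it is a recollection from~\cite{piexp1}, and the proof given is a bare pointer (the solubility case to [§2.5 Théorème~2, §2.10 Corollaire~1] of that paper, the triviality case (1)$\Leftrightarrow$(2) to its [Proposition~4], and the triviality case (1)$\Leftrightarrow$(3) to the radius formula [Théorème~3]). Your sketch of the underlying argument has the right architecture --- a Witt-theoretic calibration for (2)$\Leftrightarrow$(3) and an analytic comparison for (1)$\Leftrightarrow$(2), with the hard direction correctly located at (1)$\Rightarrow$(2) --- but it has a genuine gap at the centre of the first bridge. The claim that dividing the ghost component $a_{mp^j}$ by $\pi_j$ ``trades Witt-integrality of $e$ for coordinatewise integrality of $\tilde e$, the $\pi_j$ having soaked up the Artin--Hasse denominators'' is precisely the main theorem of~\cite{piexp1}: the kernel of a power of Frobenius on $W(R)$ is a free module of rank one over the corresponding truncated Witt ring, generated by the Witt vector with ghost components $(\zeta-1,\zeta^p-1,\ldots)$. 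This is not formal bookkeeping. One direction ((3)$\Rightarrow$(2)) requires the integrality of the Robba exponential $\exp(\pi_dT+\cdots+\pi_0T^{p^d}/p^d)$ (Matsuda/Pulita); the converse ((2)$\Rightarrow$(3)) is the surjectivity of multiplication by that generator, proved in~\cite{piexp1} by induction on $d$ after passing to an algebraically closed extension to extract $p^{d-1}$-th roots. Nothing in your heuristic supplies either ingredient, and it is exactly here that the genuine work lies.

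Two further points. First, your plan to settle (1)$\Rightarrow$(2) by ``invoking rather than reproving'' the radius formula [Théorème~3] is circular as an argument, since that formula is itself deduced in~\cite{piexp1} from the equivalence (1)$\Leftrightarrow$(3) you are in the course of establishing; the independent proof of (1)$\Leftrightarrow$(2) there proceeds instead by explicit coefficient estimates on products of $\pi$-exponentials ($\abs{\pi_0}\leq\limsup\abs{a_i}<1$), granted (2)$\Leftrightarrow$(3). As a citation it is of course harmless, which is all the present paper does. Second, the parenthesised case is not the unparenthesised one ``with one extra unit of $\pi_0$-adic room'': in condition~(2) the resp.\ clause is a tail condition (coefficients \emph{eventually} divisible by $\pi_0$), while in condition~(3) it is a strict inequality on \emph{all} non-constant coefficients up to degree $D$ (reduction to $1$ in $\kappa$), and when $K$ is more ramified than $\QQ_p(\pi_0)$ these are not related by a uniform shift of valuation; the paper accordingly routes the two triviality equivalences through two different results of~\cite{piexp1}.
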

\begin{remark}\label{remark}
Recall this correspond to the solubility (resp. triviality) of~\eqref{diffeq}.
We note that, in condition~\eqref{condition3}, the reduction invoked in the trivial case is meaningful thanks to the integrality expressed in the
solvable case. 
\end{remark}
\begin{proof}
 The solubility case of Theorem~\ref{thmarticle} is namely~\cite[§2.5 Théorème~2, §2.10~Corollaire~1]{piexp1}.
The equivalence of the first two statements in the triviality case follows from~\cite[Proposition~4]{piexp1}. The equivalence
of the first and third statement in the triviality case follows from the formula for the radius of convergence~\cite[Théorème~3]{piexp1}.
\end{proof}

\paragraph{Characterisation.} In the solvable case, cf Remark~\ref{remark}, we may reduce~$\tilde{e}(T)$ into
\begin{equation}\label{eHat}
\widehat{e}(T) \in \kappa[T]/(T^{D+1}).
\end{equation}
As a consequence of Theorem~\ref{thmarticle}, a soluble~\eqref{diffeq} is characterised by~$\widehat{e}(T)$ as follows.
\begin{proposition}[Characterisation of differential equations]\label{equivalence}
 Consider 
\begin{itemize}
\item 	two polynomials~$L_1(T)$ and~$L_2(T)$ in~$K[T]$, each of degree at most~$D$;
\item    the corresponding differential equations, say~\eqref{diffeq}$_1$ and~\eqref{diffeq}$_2$ resp.;
\item    and the corresponding truncated series~$\widehat{e}_1(T)$ and~$\widehat{e}_2(T)$.
\end{itemize}
Assume solubility of~\eqref{diffeq}$_1$ or~\eqref{diffeq}$_2$. Then~\eqref{diffeq}$_1$ and~\eqref{diffeq}$_2$ are equivalent if and only
if
\begin{equation}\widehat{e}_1(T)=\widehat{e}_2(T).\label{propformula}\end{equation}	
\end{proposition}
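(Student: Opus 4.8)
The plan is to translate the equivalence relation of §\ref{paraequiv} into a triviality statement for a single auxiliary equation, and then to read that triviality off from the reductions $\widehat{e}_1$ and $\widehat{e}_2$ via Theorem~\ref{thmarticle}. First I would form the \emph{difference} equation: since $e_i(T)=\exp(P_i(T))$, the ratio satisfies $e_1(T)/e_2(T)=\exp\bigl(P_1(T)-P_2(T)\bigr)$, so it is precisely the series attached, in the sense of~\eqref{formalsol}, to the polynomial $P:=P_1-P_2$. As $P_1$ and $P_2$ both fit the framework with the fixed bound $D$, so does $P$, and all the notation~\eqref{indices}--\eqref{Formulatilde} applies to it unchanged. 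By the very definition of equivalence, \eqref{diffeq}$_1$ and~\eqref{diffeq}$_2$ are equivalent if and only if $e_1/e_2\in\Odagger$, that is, if and only if this difference equation is \emph{trivial}.

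The key computation is that the construction~\eqref{Formulatilde} is compatible with this difference. The map $P\mapsto\tilde P$ is $K$-linear on coefficients, so $\widetilde{P_1-P_2}=\tilde P_1-\tilde P_2$, whence, working modulo $T^{D+1}$,
\[
\tilde e(T)\;=\;\exp\bigl(\tilde P_1-\tilde P_2\bigr)\;\equiv\;\tilde e_1(T)\cdot\tilde e_2(T)^{-1}\pmod{T^{D+1}},
\]
the inverse existing because $\tilde e_2$ has constant term $1$. Assuming both equations soluble, $\tilde e_1$ and $\tilde e_2$ have coefficients in $R$ by Theorem~\ref{thmarticle}, and the difference equation is itself soluble, since $e_1/e_2$ has radius at least the minimum of the two radii, hence $\geq 1$. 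Therefore $\tilde e$ too has coefficients in $R$ and admits a reduction $\widehat e\in\kappa[T]/(T^{D+1})$ as in~\eqref{eHat}. Since reduction modulo the maximal ideal is a ring homomorphism and $\widehat e_2$ is a unit, I obtain $\widehat e=\widehat e_1\cdot\widehat e_2^{-1}$. By Theorem~\ref{thmarticle}\eqref{condition3} in the trivial case, the difference equation is trivial exactly when $\widehat e=1$, i.e. exactly when $\widehat e_1=\widehat e_2$; together with the previous paragraph this is~\eqref{propformula}.

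It remains to reconcile this with the one-sided solubility hypothesis. The point is that solubility propagates across equivalence: from $e_2=e_1\cdot(e_2/e_1)$ with $e_2/e_1\in\Odagger$ of radius $>1$, the radius of $e_2$ is at least the minimum of the radii of the two factors, hence $\geq 1$. Thus for the implication ``equivalent $\Rightarrow$~\eqref{propformula}'' the assumed solubility of one equation already forces solubility of both, so that $\widehat e_1$ and $\widehat e_2$ are defined and the argument above applies verbatim. Conversely, the identity~\eqref{propformula} presupposes both $\widehat e_i$ defined, hence both equations soluble, and then the same computation gives triviality of the difference, that is, equivalence.

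Since the bulk of the argument is a direct consequence of Theorem~\ref{thmarticle}, I expect the main obstacle to be bookkeeping rather than depth: one must verify that the difference equation is \emph{genuinely soluble} (so that its reduction $\widehat e$ is legitimate and condition~\eqref{condition3} of Theorem~\ref{thmarticle} may be invoked), and that the one-sided hypothesis indeed transfers to both equations in each direction. The multiplicativity $\tilde e\equiv\tilde e_1\tilde e_2^{-1}$ modulo $T^{D+1}$ and its compatibility with the reduction to $\kappa[T]/(T^{D+1})$ are routine, but must be phrased with care precisely because $\widehat e$ is defined only in the soluble case.
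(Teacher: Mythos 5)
Your proposal is correct and follows essentially the same route as the paper: reduce to the triviality of the difference equation attached to $P_1-P_2$, observe that its truncated series is $\tilde e_1/\tilde e_2$ (integral because $1+TR[[T]]/(T^{D+1})$ is a multiplicative group), and conclude via the triviality criterion of Theorem~\ref{thmarticle}. The only cosmetic difference is that you justify solubility of the difference equation by a radius-of-the-product argument where the paper invokes integrality of $\tilde e_1/\tilde e_2$ directly; both amount to the same appeal to Theorem~\ref{thmarticle}.
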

As may be expected we will extract the index form this complete invariant~$\widehat{e}(T)$.
\begin{proof} Solubility is invariant under equivalence; we can assume both~\eqref{diffeq}$_1$ and~\eqref{diffeq}$_2$ are soluble. It suffices to show the equation associated with~$L_1(T)-L_2(T)$ is trivial. The associated truncated series is~$\tilde{e}_1(T)/\tilde{e}_2(T)\pmod{\left( T^{D+1}\right)}$. 
It has integral coefficients (recall~$1+TR[[T]]/(T^{D+1})$ is a multiplicative group.) The identity\eqref{propformula} is equivalent to the triviality of the reduction of~$\tilde{e}_1(T)/\tilde{e}_2(T)$ in $\kappa[[T]]/{\left( T^{D+1}\right)}$. By Theorem~\ref{thmarticle}, this is equivalent to condition~\eqref{condition1} of Theorem~\ref{thmarticle}. This concludes.
\end{proof}

\begin{remark}  Conversely, we may lift a given~$\widehat{e}(T)$ to some~$\tilde{e}(T)$ in~$R[T]/(T^D+1)$, write~$\tilde{P}(T)$ the logarithm of 
the latter, considered as a polynomial, deduce~$P(T)$, take its derivative~$L(T)$ and get an equation~\eqref{diffeq} which will produce this~$\widehat{e}(T)$.
\end{remark}
\begin{remark}\label{examples} The construction of~$\widehat{e}(T)$ from~\eqref{diffeq} depends on the choice of~$D$. For example, take~$L(T)$ 
is the constant polynomial~$\pi_0$, so that~$e(T)=\exp(\pi_0\cdot T)$, known to have radius~$1$. 
\begin{enumerate}
\item \label{example1} For~$D=1$, one gets~$\tilde{P}(T)=T$, and one has~$$\tilde{e}(T)=1+T\pmod{(T^2)}$$ 
and~$\widehat{e}(T)=1+T\pmod{(T^2)}$.
\item \label{example2} For~$D=p-1$, one gets~$\tilde{P}(T)=T$, and  one has~$$\tilde{e}(T)=1+T+\ldots+T^p/(p-1)!\pmod{(T^p)}$$ and~$\widehat{e}(T)=1+T+\ldots+T^{p-1}/(p-1)!\pmod{(T^p)}$.
\item \label{example3} For~$D=p$, one gets~$\tilde{P}(T)=\frac{\pi_0}{\pi_1}T$, and  one has~
$$\tilde{e}(T)=1+\frac{\pi_0}{\pi_1}T+\ldots+\left(\frac{\pi_0}{\pi_1}T\right)^{p-1}\cdot\frac{1}{(p-1)!}+\left(\frac{\pi_0}{\pi_1}T\right)^{p}\cdot\frac{1}{p!}\pmod{(T^{p+1})}$$
 and~$\widehat{e}(T)=1+0+u\cdot T^{p}\pmod{(T^p)}$ for the unit~$u=-1$ of~$\mathbf{Z}/(p)$.
\end{enumerate} 
\end{remark}

\begin{remark}[cf.{\cite[Intoduction]{Pulita}}] A famous landmark result is the $p$-adic local monodromy theorem (formerly Crew’s conjecture). The rank one case gets a 
correspondence between Artin-Schreier-Witt characters of absolute Galois group of a local field in characteristic zero and rank one differential equations over the Robba ring. Pulita’s $\pi$-exponential was developed in order to made this correspondence explicit. Given a solvable differential equation~\eqref{diffeq}, and some~$D$, we constructed a complete invariant~$\widehat{e}(T)$. This is a truncated power series, but can be interpreted equivalently as a truncated universal Witt vector, by first theorem of Cartier theory of Witt vectors. Witt motivation for Witt vectors was the classification of cyclic coverings on characteristic zero. The universal Witt vector obtained has a $p$-typical decomposition, and each factor corresponds to some Artin-Schreier-Witt covering, composed with a Kummer covering. Each of this coverings generates by relative rigid cohomology, a degree~$1$ “$F$-isocrystal” over the affine line in characteristic~$p$ with action of~$\mathbf{Z}/p^{d}\mathbf{Z}$ for some~$d$. The choice of~$\zeta$ determines a character~$\chi_\zeta$ of~$\mathbf{Z}/p^d\mathbf{Z}$ and allows to consider the $\chi_\zeta$-equivariant sub-$F$-isocrystal, which is actually of rank~$1$. It is to be expected that the product of these $F$-isocrystals of rank~$1$ are realised by the original equation~\eqref{diffeq}. Our constructions would provide a computationally accessible exhibition of this correspondence.  The details and precise computations for establishing such a fact require lengths in contextualising that should be offered in another article.

\end{remark}	

\paragraph{The \emph{index} $\chi$.}\label{sectionindex}
 We assume the solubility of~\eqref{diffeq}. Associated to~\eqref{diffeq} is its \emph{index}~$\chi$. 
\subparagraph{} It is the index of the differential operator\footnote{Equivalently, the index of~$d:\Odagger\cdot e(T) \xrightarrow{f\mapsto df}\Odagger\cdot e(T)dT$.
%The index relative to the Robba ring (of~$T^{-1}$) is $\chi-1$ ???
}
 in the de Rham complex:
\begin{equation}\label{deRham}
0\xrightarrow{}\Odagger\xrightarrow{f(T)\mapsto df - L\cdot f\cdot dT} \Odagger\xrightarrow{}0
\end{equation}
namely, the Euler-Poincaré characteristic~$\chi=\dim H^0 - \dim H^1$ of the cohomology groups of the complex~\eqref{deRham}.

\subparagraph{The slope (\cite{Christol}[Definition~2.6 onward]).} There is yet another interpretation of this invariant, due to Robba\footnote{See \cite[§1.3 with a=1]{RobbaGAU93} for a statement with the algebra~$\mathcal{H}^+$ of functions converging on the closed disk instead of the dagger  algebra, and for the non solvable case. (see also~\cite[Theorem~10.2.2]{ChristolRobba}) The proper reference, that we lack, is left to the knowledgeable reader.

}. See~\cite{Christol}, \cite[§9-10]{ChristolRobba} for the notion of a Dwork (or Berkovich) generic point~$g_r$ at radius~$r\in\mathbf{R}_{\geq0}$ and more details. We consider the the radius of convergence~$\RoC(r)$ of~\eqref{diffeq} centered at~$g_r$: considering the differential equation given by the coefficient~$L(T-g_r)$, it is the radius of convergence of the solution~$\exp\left(P(T-g_r)-P(g_r)\right)$. Consider the function $\RoC$ viewed in logarithmic abscissa and ordinate:
\begin{equation}\label{RoC}
v\mapsto\log(\RoC(\exp(v))),
\end{equation}
It happens to be continuous and affine by part:  a polygonal line. Its right derivative at~$v=0$ (the right\footnote{If we parameter the axes with respect to valuations (opposite of~$\log_p$), this is the left slope. Compare the examples from the algorithm joined with~\cite{piexp1}.} slope) is~$\chi$.

\paragraph{Formula for the index.} By Proposition~\ref{equivalence}, a soluble~\eqref{diffeq} is characterised by the associated~\eqref{eHat}.
One should be able to recover the index from~\eqref{eHat}. For that purpose, we introduce the notation~$v_T(-)$ for the valuation associated with~$T$.
Namely
\begin{equation}\label{vt}
\text{~$v_T(\widehat{e}(T)-1)$ is the multiplicity of~$0$ as a root of~$\widehat{e}(T)-1$,   }
\end{equation}
obviating the case~$\widehat{e}(T)-1=0$ which corresponds to trivial equations. 
The algorithm joined to~\cite{piexp1} computes~$\tilde{e}(T)$ with some precision. With the slightest extra cost, this allows to exactly deduce~$\widehat{e}(T)$, then~$v_T(\widehat{e}(T))$ and finally~\eqref{mainformulap}.
\subparagraph{$p$-typical case.}	We first treat the $p$-typical case. This is the case where
\begin{equation}\label{Hypothesep}
 P(T)\in \bigoplus_{i\geq1} K\cdot T^{p^i}.
 \end{equation}

\begin{theorem}[Closed formula for the index in the $p$-typical case]\label{mainthmp}
Assume solubility and non triviality\footnote{The non-triviality assumptions becomes 
superfluous under the convention that for the null truncated series~$v_T(0\pmod{T^{D+1}})=+\infty$, so that
the fraction in~\eqref{mainformulap} evaluates to~$0$.}
 of~\eqref{diffeq} and assume~\eqref{Hypothesep}. 
Then the index of~\eqref{diffeq} is
\begin{equation}\label{mainformulap}
\chi=1-\frac{p^d}{v_T(\widehat{e}-1)}.\end{equation}
\end{theorem}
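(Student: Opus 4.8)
The plan is to interpret $\chi$ as the first (right) slope of the $\RoC$-polygon at $v=0$, as recalled above, and to compute that slope by a Frobenius descent organised around the reduced invariant $\widehat{e}$, the induction being on the integer $d$.

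First I would record the differential equation satisfied by $\widehat e$. Write $\tilde P(T)=\beta_0 T+\sum_{j\ge1}\beta_j T^{p^j}$ with $\beta_i=a_{p^i}/\pi_{d-i}$ (so $\beta_0=0$ in the strict $p$-typical case); then $\tilde e$ solves $\tilde e'=\tilde P'\tilde e$ with $\tilde P'=\beta_0+\sum_{j\ge1}\beta_j p^{j}T^{p^j-1}$. Every non-constant monomial here carries a factor $p^{j}$ ($j\ge1$), hence is divisible by $p$ and a fortiori by $\pi_0$ (as $v(\pi_0)=1/(p-1)\le1$); reducing modulo $\pi_0$ leaves $\widehat e'=\overline{\beta_0}\,\widehat e$. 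Thus, unless $\beta_0$ is a unit, one gets $\widehat e\in\kappa[[T^p]]/(T^{D+1})$, so $\widehat e(T)=\widehat e_1(T^{p})$ and $v_T(\widehat e-1)=p\cdot v_S(\widehat e_1-1)$; in particular $v_T(\widehat e-1)$ is a power of $p$.

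The key point is that this descent is realised by an honest Frobenius antecedent. Setting $P_1(S)=\sum_{j\ge1}a_{p^j}S^{p^{j-1}}$ one has $P(T)=P_1(T^{p})$ identically, hence $e(T)=e_1(T^{p})$; since $e$ has radius $\ge1$ so does $e_1$, and $\widehat e_1$ is exactly the reduced invariant of the solvable equation $P_1$ for the parameter $d-1$ (replacing $\zeta$ by $\zeta^{p}$ keeps $\pi_0,\dots,\pi_{d-1}$ and discards $\pi_d$). Because $e=\phi^{*}e_1$ for $\phi\colon T\mapsto T^{p}$, the theory of Frobenius antecedents relates $\RoC_e$ and $\RoC_{e_1}$ near $v=0$ by a dilation $r\mapsto r^{p}$ of the abscissa together with a $p$-th root of the ordinate, which scales the boundary slope by the factor $\tfrac1p\cdot p=1$: the first slope, i.e. $\chi$, is unchanged. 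Feeding in the induction hypothesis $\chi(e_1)=1-p^{\,d-1}/v_S(\widehat e_1-1)$ and $v_S(\widehat e_1-1)=v_T(\widehat e-1)/p$ yields $\chi(e)=1-p^{d}/v_T(\widehat e-1)$, closing the induction in this case.

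It remains to treat the base of the induction, namely the case where $\beta_0$ is a unit (this is the only way the descent can stop, and it is forced at $d=0$ by solubility and non-triviality). There $v_T(\widehat e-1)=1$ and one must show $\chi=1-p^{d}$: the reduced invariant then has a genuine linear term, the equation realises maximal wild ramification for the cyclotomic level at hand, and its Swan conductor equals $p^{d}$, whence $\chi=1-p^{d}$ through the index/Swan dictionary (\cite[Théorème~1.4.4]{Pulita}; for $d=0$ this is the Artin–Schreier case $\chi=0$). I expect this terminal computation, together with making the Frobenius-antecedent slope transformation rigorous, to be the main obstacle — and both difficulties have a common source: solubility in the non-trivial direction is produced by \emph{cancellations} in the exponential (the Pulita/Dwork phenomenon, by which $\exp(\pi_0 T)$ already converges on the closed disc though $\pi_0 T$ alone would force a smaller radius), so the generic radius cannot be read off the individual monomials of $P$ and the naive Newton polygon of $\exp(P)$ is false. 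Controlling the true first slope — via \cite[§5~(23)]{piexp1} or the Swan interpretation — through the descent and at the terminal level is where the real work lies.
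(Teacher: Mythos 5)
Your overall architecture — locate $v_T(\widehat e-1)$ as a power of $p$, peel off that power by a Frobenius descent $T\mapsto T^p$ which preserves the boundary slope, and quote Pulita at the terminal step — is a legitimate alternative to the paper's argument (which instead factors $\widehat e$ multiplicatively in $W_d(\kappa)$ as $AH(w_0T)\cdots AH(w_dT^{p^d})$ and treats all factors at once). But as written the proposal has two gaps, one repairable and one essential.

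First, the descent step rests on a false premise. You claim every non-constant monomial of $\tilde P'=\beta_0+\sum_{j\ge1}\beta_jp^jT^{p^j-1}$ is divisible by $p$, hence that $\widehat e'=\overline{\beta_0}\,\widehat e$ after reduction. Solubility forces integrality of the coefficients of $\tilde e$, not of $\tilde P$: for Pulita's $e_1$ one has $\tilde P=T+T^p/p$, so $\beta_1p=1$ is a unit and $\tilde P'=1+T^{p-1}$ does not reduce to a constant. This is exactly the Dwork cancellation you invoke at the end of your proposal; it already defeats the reduced ODE. The dichotomy you need — $\overline{\beta_0}=0$ if and only if $\widehat e\in\kappa[[T^p]]$ — is nevertheless true, but you must get it from the structure of $\widehat e$ itself (e.g.\ from its Artin--Hasse factorisation in $W_d(\kappa)$, since $\overline{\beta_0}$ is the coefficient of $T$ in $\widehat e$), not from the differential equation modulo the maximal ideal.

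Second, and decisively, the base case is asserted rather than proved. When $\overline{\beta_0}\neq0$ you must show $\chi=1-p^d$, and "maximal wild ramification, Swan conductor $p^d$" is a restatement of the conclusion, not an argument: in that case $\widehat e=AH(w_0T)\cdot\prod_{i\ge1}AH(w_iT^{p^i})$ with $w_0\neq0$ but with arbitrary higher components, and one still has to (i) realise each nonzero factor by a Pulita exponential $e_{d-i}(\lambda T^{p^i})$ of index $1-p^{d-i}$ (this is where Pulita's index computation and the Frobenius structure actually enter), and (ii) invoke the fact that the index of a product of soluble equations with pairwise distinct indices is the minimum of those indices, so that the $i=0$ factor dominates. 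That is precisely the content of the paper's proof of Theorem~\ref{mainthmp} (via Lemma~\ref{lemmamaxraxon} and the lemma on $AH(\lambda T^{p^i})$), and without it your induction does not terminate with the claimed value. You correctly flag this terminal computation, together with the rigorous boundary-slope invariance under Frobenius pullback (which the paper sidesteps by using the Frobenius structure on the explicit exponentials to get equivalence of equations, hence equality of indices), as "where the real work lies" — but that work is the theorem.
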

\begin{remark} As an implied statement:~$v_T(\widehat{e}-1)$ is a power of~$p$.
 As a corollary:~$\chi-1$ is the negative of a power of~$p$.
\end{remark}
\begin{example} In the three cases of~Remark~\ref{examples}, formula~\eqref{mainformulap}
becomes respectively~$\chi=1-p^0/1$ for~\eqref{example1};~$\chi=1-p^0/1$ for~\eqref{example2}; and~$\chi=1-p^1/p$ for~\eqref{example3}.

\end{example}

\begin{subequations}
\subparagraph{$p$-typical decomposition.}\label{ptypdecomp} In general we may uniquely write
\begin{equation}\label{ptypP}
P(T) = \sum_{1\leq m\leq D,\\ p\nmid m} P_m(T^m),\text{ where each }P_m\text{ satisfies~\eqref{Hypothesep}}.
\end{equation}
We can correspondingly construct~$e_m(T):=\exp(P_m(T))$ so that
\begin{equation} e(T)= \prod_{1\leq m\leq D,\\ p\nmid m} e_m(T^m). \end{equation}
We form the corresponding~$\tilde{e}_m(T)$ and still have
\begin{equation} \tilde{e}(T)= \prod_{1\leq m\leq D,\\ p\nmid m} \tilde{e}_m(T^m). \end{equation}
It happens~$\tilde{e}(T)$ is integral if and only if each factor~$\tilde{e}_m(T^m)$ is (cf.~\cite[§X]{piexp1}). Assuming solubility
for~\eqref{diffeq}, we can consider the corresponding~$\widehat{e}_m(T^m)$, and still have
\begin{equation}
 \widehat{e}(T)= \prod_{1\leq m\leq D,\\ p\nmid m} \widehat{e}_m(T^m). \label{ptyphat}
 \end{equation}
\end{subequations}

\subparagraph{General “global” case.} Recall~\eqref{ptyphat}, \eqref{indices}, \eqref{vt}.
\begin{theorem}[Closed formula for the index]\label{mainthm}
Assume solubility of~\eqref{diffeq} and consider the decomposition~\eqref{ptyphat}.% so that the polynomial~$\tilde{e}(T)$ has integer coeff
 Then the index of~\eqref{diffeq} is 
\begin{equation}\label{mainformula}
\chi=1-
\max_{1\leq m\leq D,\\ p\nmid m}
\frac{m\cdot p^{d_m}}{v_T(\widehat{e}_m-1)}
%\left(\floor{\log_p(D)}-\log_p(v_T(\widehat{e}_m-1))\right)
.\end{equation}
\end{theorem}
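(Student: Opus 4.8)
The plan is to deduce the general formula~\eqref{mainformula} from the $p$-typical formula~\eqref{mainformulap} through the decomposition~\eqref{ptyphat}, assembling the contributions of the individual factors by means of the radius of convergence function. Since the index is the right slope at~$v=0$ of the polygonal function~\eqref{RoC}, I would argue throughout with this slope interpretation rather than with the de Rham cohomology~\eqref{deRham} directly. Write~$\rho(v):=\log(\RoC(\exp v))$ for the full equation and~$\rho_m$ for the analogous function attached to the factor~$e_m(T^m)$.

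First I would check that solubility propagates to each $p$-typical factor: by Theorem~\ref{thmarticle} solubility is the integrality of~$\tilde e$, and by the multiplicativity recorded in~\ref{ptypdecomp} this is equivalent to the integrality of every factor~$\tilde e_m(T^m)$. Each~$e_m(T^m)$ is therefore soluble, whence~$\rho_m(0)=0$, and the radius of convergence of the product~$e=\prod_m e_m(T^m)$ is the pointwise minimum~$\rho(v)=\min_m\rho_m(v)$. As all the~$\rho_m$ take the common value~$0$ at~$v=0$, the right slope of their minimum at~$v=0$ is the minimum of the right slopes; passing to~$1-\chi$ (the irregularity, one minus the slope) turns this minimum into a maximum, giving~$1-\chi=\max_m\,(1-\chi_m^\flat)$, where~$\chi_m^\flat$ is the index of~$e_m(T^m)$.

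It then remains to evaluate each~$\chi_m^\flat$. The factor~$e_m(T^m)$ is the pullback, under the tame substitution~$S=T^m$ (recall~$p\nmid m$), of the genuinely $p$-typical equation~$e_m(S)=\exp(P_m(S))$, of degree at most~$D/m$ in~$S$. Applying Theorem~\ref{mainthmp} to the latter --- with the degree bound~$D/m$, so that the relevant exponent is exactly~$d_m=\floor{\log_p(D/m)}$ of~\eqref{indices}, and after verifying that the truncation~$\widehat e_m$ obtained here agrees with the one of the decomposition and uses the same uniformisers~$\pi_i$ --- yields~$\chi_m^{(S)}=1-p^{d_m}/v_T(\widehat e_m-1)$, the valuation being taken in the variable of~$\widehat e_m$. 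One is then reduced to the scaling of the index under~$S=T^m$: since this substitution multiplies the~$T$-adic scale by~$m$, the boundary break, hence the irregularity~$1-\chi$, is multiplied by~$m$, so that~$1-\chi_m^\flat=m\,(1-\chi_m^{(S)})=m\,p^{d_m}/v_T(\widehat e_m-1)$. Feeding this into the maximum of the previous step produces exactly~\eqref{mainformula}.

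I expect the crux to lie in the two facts governing how the slopes combine. The first is that the radius of convergence of the product is genuinely the minimum of the individual radii --- equivalently that the irregularities of the distinct $p$-typical factors combine by a maximum with no cancellation; this should be anchored in the observation that the factors~$e_m(T^m)$ carry their leading behaviour in the pairwise disjoint sets of~$T$-degrees~$\{m\,p^i\}_{i\geq1}$ as~$m$ ranges over the integers coprime to~$p$, so that no two factors can interfere at the decisive order. The second is the exact scaling of the index under~$S=T^m$; rather than invoke a black-box ramification formula for the $p$-adic Swan conductor, the cleanest route is probably to transport the explicit radius of convergence function~\cite[§5~(23)]{piexp1} through the substitution and read off the new right slope. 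The surrounding bookkeeping --- the match of the uniformisers~$\pi_i$ and of the truncation order in the passage between~$e_m(S)$ and~$e_m(T^m)$ --- is routine but requires care.
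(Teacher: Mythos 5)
Your proposal is correct and follows essentially the same route as the paper: decompose into the $p$-typical factors of~\eqref{ptyphat}, apply Theorem~\ref{mainthmp} to each~$P_m(S)$ with degree parameter~$D/m$ (hence exponent~$d_m$), rescale the irregularity by~$m$ under~$S=T^m$ (the paper's fact~(ii)), and combine the factors by taking the minimum of the indices, the combination being licit because the candidate values~$m\cdot p^i$ lie in pairwise disjoint sets as~$m$ varies over integers prime to~$p$ --- which is exactly the distinctness-of-indices hypothesis of the paper's Lemma~\ref{lemmamaxraxon}. The only difference is presentational: you unpack Lemma~\ref{lemmamaxraxon} and fact~(ii) and rederive them from the radius-of-convergence function~\eqref{RoC}, which is precisely how the paper indicates those two ingredients are established.
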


%
%We provide here a direct formula for the index~$\chi$. Namely the degree of the polynomial in~$\left(R\left/\sqrt{(p)}\right.\right)[T]$ deduced from~$\tilde{e}(T)$ after reduction of its coefficients in characteristic~$p$. (This reduction is possible thanks to Th.~\ref{thmarticle}\,\eqref{condition3}.)
%\begin{theorem}[Closed formula for the index]\label{mainthm}
%Assume solubility of~\eqref{diffeq}.% so that the polynomial~$\tilde{e}(T)$ has integer coeff
%Then its index is given by
%\begin{equation}\label{mainformula}\chi=1-\deg\left( \tilde{e}(T)\pmod{\sqrt{(p)}}\right).\end{equation}
%\end{theorem}

\section{Applications}

\subsection{Application to exponential sums.}\label{appliexpsums}
Let us indicate a last interpretation of the index. Given a $p$-typical soluble~\eqref{diffeq} one can construct families of exponential
sums, which are written down in~\cite{Morofushi}, and some generating function, the~$L$-function, actually an Euler factor, concretely
a polynomial of some degree, say~$\Delta$. The thesis~\cite{Morofushi} investigates these~$L$-functions. The object of its first, of two, part is this
degree~$\Delta$, and~\cite{Morofushi} succeeds in providing bounds by direct computations, for some low values of~$d$. 

On the other hand, the trace formula gives a cohomological interpretation of 
the $L$-function, and its degree is~$\dim(H^1)$. It follows~$\Delta$ is given:
\begin{itemize}
\item in the trivial case, by~$\dim(H^1)=0$ (but $\chi=\dim(H^0)=1\neq0$);
\item in the non trivial (but still soluble) case, by~$\dim(H^1)=-\chi$.
\end{itemize}
As a consequence, Theorem~\ref{mainthmp} answers\footnote{ Provided one has 
established the link between Pulita’s $\pi$-exponentials and the exponential sums written down in~\cite{Morofushi}. This
link is claimed without proof in~\cite{Morofushi}, and is the subject of a projected article in the series started by~\cite{piexp1}
and continued here.} the first problem studied in~\cite{Morofushi} with the closed formula~\eqref{mainformulap}.
Finally the algorithm accompanying~\cite{piexp1} allows to compute the right hand side of~\eqref{mainformulap}.

\subsection{Application to comparison.}\label{applicomparison}

\paragraph{Comparison map.}
  Let us consider the inclusion of de Rham complexes, into~\eqref{deRham}, of
\begin{equation}\label{deRhamAlg}
0\xrightarrow{}K[T]\xrightarrow{f(T)\mapsto df - L\cdot f\cdot dT} K[T]\xrightarrow{}0.
\end{equation}
The cohomology groups of~\eqref{deRham} are refered sometimes as \emph{analytic} cohomology, \emph{Dwork} cohomology or \emph{rigid} cohomology, etc.
For~\eqref{deRhamAlg}, one sometimes speak of \emph{rational} or \emph{algebraic} cohomology. We will use~\cite[§2.4]{Robba} terminology: Dwork and rational cohomology. The 
inclusion of complexes induces a comparison map from rational cohomology to Dwork cohomology, say
\begin{equation}\label{comparisonmap}
 H^0\xrightarrow{\mathrm{comparison_0}}H^0_\mathrm{rat}\qquad H^1\xrightarrow{\mathrm{comparison_1}}H^1_\mathrm{rat}
\end{equation}
easily seen to be injective and surjective respectively (loc.~cit.).

A recurrent difficulty has been that it is not always an isomorphism: For example Boyarsky principle, on variation of cohomology and Gross-Koblitz formula
for the $p$-adic Gamma function, relies on an interplay between the two
cohomology spaces:
\begin{itemize}
\item  a Frobenius endomorphism which comes from the Dwork cohomology,
\item  a functional equation
which comes from the rational cohomology.
\end{itemize}
\paragraph{Comparison criterion.}
By injectivity and surjectivity property, the fact that the comparison map is an isomorphism is equivalent to the identity of the dimensions of the Dwork and rational cohomology groups. We precisely computed it for the Dwork cohomology. For the rational cohomology,
this is simply given by the degree of~$L(T)$. We therefore can state the following.
\begin{corollary}\label{corocomparison}
 Consider a solvable~\eqref{diffeq}, choose~$D=\deg(P)$ and let~$D=m\cdot p^n$ with~$p\nmid m$. The comparison map~\eqref{comparisonmap} from de Rham cohomology with coefficients in~$K[T]$ to cohomology of~\eqref{deRham} is an isomorphism if and only if (equivalently):
\begin{itemize}
\item one has~$\chi=1-D$;
\item the factor~$\widehat{e}_m(T)$ in~\eqref{ptyphat} has non zero derivative 
at~$0$.
\end{itemize}
\end{corollary}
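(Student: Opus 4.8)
The plan is to exploit the chain of equivalences already assembled just above the statement: by injectivity of $\mathrm{comparison}_0$ and surjectivity of $\mathrm{comparison}_1$, the comparison map is an isomorphism exactly when the Dwork and rational cohomology spaces have the same dimension in each degree. So I would compute the four dimensions and translate the resulting equalities into the two bullets. First I would dispose of the trivial case: there $\widehat{e}(T)=1$, hence the relevant factor $\widehat{e}_m$ equals $1$ and has vanishing derivative at $0$, so the second bullet fails; meanwhile $\chi=1\neq 1-D$ for $D\geq 1$, so the first fails; and $\dim H^0=1\neq 0=\dim H^0_{\mathrm{rat}}$ shows the comparison is not an isomorphism. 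Thus the asserted equivalence holds vacuously, and I may assume \eqref{diffeq} soluble and non-trivial.

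Next I would pin down the two cohomologies. For \eqref{deRhamAlg}, since $D=\deg P$ the coefficient $L=P'$ has degree $D-1\geq 0$, so the operator $f\mapsto f'-L\,f$ on $K[T]$ strictly shifts the leading term and is injective; this gives $H^0_{\mathrm{rat}}=0$ and, by a leading-term count, $\dim H^1_{\mathrm{rat}}=\deg L=D-1$. On the Dwork side, non-triviality means $e(T)$ has radius exactly $1$, so $e(T)\notin\Odagger$ and $H^0=0$, while the index reading gives $\dim H^1=-\chi$. As the $H^0$'s already coincide, the isomorphism is equivalent to $\dim H^1=\dim H^1_{\mathrm{rat}}$, i.e. to $-\chi=D-1$; this is the first bullet $\chi=1-D$.

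Finally I would match $\chi=1-D$ with the condition on $\widehat{e}_m$ through the closed formula \eqref{mainformula}. Writing $D=m\,p^{n}$ with $p\nmid m$, one has $d_m=\floor{\log_p(D/m)}=n$, so the $m$-th term of the maximum equals $D/v_T(\widehat{e}_m-1)$. For any admissible index $j$ one has $p^{d_j}\leq D/j$, whence $j\,p^{d_j}\leq D$, with equality only when $D/j$ is a power of $p$; among $j$ prime to $p$ this forces $j=m$. Thus every term of the maximum is at most $D$ and only the $m$-th one can attain $D$, so $\chi=1-D$ holds if and only if $D/v_T(\widehat{e}_m-1)=D$, i.e. $v_T(\widehat{e}_m-1)=1$. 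Since $\widehat{e}_m$ has constant term $1$, this says precisely that $\widehat{e}_m$ has non-zero derivative at $0$, which is the second bullet.

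I expect the only genuine point to check is the distinguished status of the index $m$: that $j\,p^{d_j}\leq D$ with equality solely at $j=m$, and that the $m$-th term is really present, i.e. $v_T(\widehat{e}_m-1)<+\infty$. The former is the elementary estimate above; the latter holds because $\deg P=D=m\,p^{n}$ places the non-zero top coefficient $a_D$ in the $m$-sector of the decomposition \eqref{ptypP}, forcing $\widehat{e}_m\neq 1$. Everything else is routine bookkeeping with the dimensions and with formula \eqref{mainformula}.
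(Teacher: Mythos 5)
Your proposal is correct and follows essentially the same route the paper takes: reduce the isomorphism question to equality of dimensions via the injectivity/surjectivity of \eqref{comparisonmap}, identify $\dim H^1_{\mathrm{rat}}=\deg L=D-1$ and $\dim H^1=-\chi$, and then read off from \eqref{mainformula} that the maximum attains $D$ precisely when the $m$-th term does, i.e.\ when $v_T(\widehat{e}_m-1)=1$. One side remark in your last paragraph is both unnecessary and false: $a_D\neq 0$ only forces $P_m\neq 0$, not $\widehat{e}_m\neq 1$ (the $m$-th $p$-typical factor can define a trivial equation, e.g.\ the superfluous factor $F$ of the paper's final proposition of \S 2); fortunately the equivalence does not need this, since with the convention $v_T(0)=+\infty$ both bullets simply fail simultaneously in that case.
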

Let us note that in the special case~$p\nmid D$, ie~$D=m$ this reduces to an innocuous check, namely that:
\begin{equation}\label{innocuous}
\text{the dominant coefficient~$a_D$ of~$P$ satisfies~$\abs{a_D}=\abs{\pi_0}$.}
\end{equation}
\begin{example} As an illustration consider the following example. It is related the conjecture formulated in~\cite{Loeser}. 
Let~$P$ be a polynomial with coefficients in~$\QQ$,
and write~$D=\deg(P)$. If~$p$ is a large enough prime, then we may assume that every coefficient of~$P$ is a $p$-adic unit, 
as well as~$D!$. Consider such a~$p$, a corresponding~$\pi_0$, and the series~$e(T)=\exp(\pi_0\cdot P(T))$.
Then the~$p$-typical decomposition corresponds to the monomial decomposition of~$P$. For every monomial~$a_iT^i$,
the series~$e_i(T)=\exp(\pi_0 a_i T^i)$ is easily seen to have radius~$1$, and index~$1-i$. The check~\eqref{innocuous}
is satisfied for any~$e_i(T)$ and for~$e(T)$. We can conclude as follows.
\begin{proposition}
For all but finitely many~$p$, the series~$\exp(\pi_0\cdot P(T))$
defines a soluble differential equation with index~$1-D$ and for which the comparison of cohomologies~\eqref{comparisonmap} is an isomorphism.
\end{proposition}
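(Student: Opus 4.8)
The plan is to reduce the statement, via Theorem~\ref{thmarticle} and Theorem~\ref{mainthm}, to a computation of the truncated invariant~$\widehat{e}$, and then to invoke Corollary~\ref{corocomparison}. Throughout write the rational polynomial as~$P(T)=\sum_{i=1}^{D}a_iT^i$ with~$a_i\in\QQ$; the series in question corresponds to the equation~\eqref{diffeq} whose polynomial (in the sense of~\eqref{formalsol}) is~$\pi_0\cdot P(T)$, so that the coefficient of~$T^i$ is~$\pi_0 a_i$. First I would discard the finitely many bad primes by requiring~$p$ large enough that every~$a_i$ is a~$p$-adic unit, that~$D!$ is a~$p$-adic unit, and that~$D<p$. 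This last condition is the crucial simplification: it forces~$d=\floor{\log_p D}=0$ and, more generally,~$d_i=\floor{\log_p(D/i)}=0$ for every~$1\le i\le D$, since~$D/i\le D<p$.

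Next I would compute~$\tilde{P}$ and establish solubility. Because every~$d_i=0$, the uniformiser dividing the coefficient of~$T^i$ in~\eqref{Formulatilde} is~$\pi_0$, so dividing~$\pi_0 a_i$ by it returns~$a_i$; hence~$\tilde{P}(T)=\sum_i a_i T^i=P(T)$ has integral (indeed unit) coefficients. As~$D!$ is a unit, the truncation~$\tilde{e}(T)=\exp(P(T))\pmod{T^{D+1}}$ has integral coefficients, so condition~\eqref{condition3} of Theorem~\ref{thmarticle} holds and~\eqref{diffeq} is soluble. Equivalently, each factor~$\exp(\pi_0 a_i T^i)$ is a Dwork exponential scaled by the unit~$a_i$ and reparametrised by~$T\mapsto T^i$, hence of radius exactly~$1$.

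I would then identify the~$p$-typical decomposition~\eqref{ptyphat}. As~$D<p$, no exponent~$i\le D$ is divisible by~$p$ and no product~$i\cdot p^{j}$ with~$j\ge 1$ stays in range; therefore the decomposition~\eqref{ptypP} is exactly the monomial decomposition, the component indexed by~$m=i$ being~$e_i(T^i)=\exp(\pi_0 a_i T^i)$. For this component~$\tilde{P}_i$ reduces to the single linear term~$a_i T$ (again~$d_i=0$), so~$\widehat{e}_i(T)=1+\bar a_i T+\cdots$ with~$\bar a_i\neq0$, giving~$v_T(\widehat{e}_i-1)=1$. Feeding this into Theorem~\ref{mainthm},
\[
\chi=1-\max_{1\le i\le D}\frac{i\cdot p^{d_i}}{v_T(\widehat e_i-1)}=1-\max_{1\le i\le D}\frac{i\cdot p^{0}}{1}=1-D,
\]
as desired; this also recovers the stated per-monomial index~$1-i$.

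Finally I would deduce the comparison isomorphism from Corollary~\ref{corocomparison}. With~$D=\deg P$ and~$D<p$ we have~$p\nmid D$, so in the notation~$D=m\cdot p^{n}$ one has~$n=0$ and~$m=D$; the criterion~$\chi=1-D$ is exactly what we just proved, or equivalently the innocuous check~\eqref{innocuous} that~$\abs{a_D\pi_0}=\abs{\pi_0}$, which holds because~$a_D$ is a unit. Hence the comparison map~\eqref{comparisonmap} is an isomorphism. I expect the only genuine—and still modest—obstacle to be the bookkeeping of the ``large~$p$'' reductions, namely pinning down that solubility holds and that~$v_T(\widehat e_i-1)=1$ rather than something larger, since everything else is a direct substitution into the already established Theorem~\ref{mainthm} and Corollary~\ref{corocomparison}.
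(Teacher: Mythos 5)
Your proposal is correct and follows essentially the same route as the paper's own argument: for large~$p$ (so that~$D<p$, all~$a_i$ and~$D!$ are units) the~$p$-typical decomposition is the monomial decomposition, each factor~$\exp(\pi_0 a_iT^i)$ is soluble of index~$1-i$, the global index is~$1-D$, and the comparison follows from Corollary~\ref{corocomparison} via the check~\eqref{innocuous}. Your write-up merely makes explicit (via~$d_i=0$,~$\tilde P=P$ and~$v_T(\widehat e_i-1)=1$) the steps the paper labels as "easily seen".
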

For the remaining~$p$, the equation may be trivial, not solvable or lack comparison. For the first two issues, the computation
of the radius of convergence may help choose a suitable change of variable. For the lack of comparison, the procedure below may apply.

\end{example}

\paragraph{Factorisation}	
Without detailing the proof, we  mention a complement. Assume for convenience that~$\kappa=\mathbf{Z}/(p)$. If~$P=a_{1}T+a_{p}T^{p}+\ldots+a_{p^d}T^{p^d}$
 is a polynomial satisfying~\eqref{Hypothesep}, we define its \emph{shift}
as
$$VP= a_{p}T+a_{p^2}T^{p}+\ldots+a_{p^d}T^{p^{d-1}}.$$
\begin{proposition}[Complement to Corollary~\ref{corocomparison}]
Assume a solvable~\eqref{diffeq} does not provide comparison isomorphism. 
Define~$F=VP_m(T^m)-P_m(T^m)$. The decomposition
$$P=F+(P-F)$$
is such that
\begin{itemize}
\item the term~$F$ defines a trivial differential module;
\item the term~$(P-F)$ has degree~$\leq D-1$.
\end{itemize}
In other words, the equation defined by~$P-F$ is equivalent to the one defined by~$P$ but has strictly lower dimensional rational cohomology.
\end{proposition}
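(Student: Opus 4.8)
The plan is to isolate the one substantive assertion, that the summand $F$ defines a \emph{trivial} differential module, and to deduce the two displayed properties from it together with a degree count. Granting triviality of $F$, the series $\exp(F)$ is a unit of $\Odagger$, so $\exp(P)\cdot\Odagger=\exp(P-F)\cdot\Odagger$ and the equations attached to $P$ and to $P-F$ are equivalent in the sense of~\ref{paraequiv}. Since the dimension of the rational cohomology $H^1_{\mathrm{rat}}$ of~\eqref{deRhamAlg} is read off from the degree of the coefficient polynomial, the bound $\deg(P-F)\le D-1<D=\deg(P)$ then yields the strict drop. Thus the genuine tasks are the degree bound and the triviality of $F$.

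The degree bound is purely formal and also fixes the sign convention. Writing $S=T^{m}$, $D=m\,p^{n}$ and $P_m(S)=\sum_{j=0}^{n}a_{mp^{j}}S^{p^{j}}$, the shift is $VP_m(S)=\sum_{j=1}^{n}a_{mp^{j}}S^{p^{j-1}}$, of $S$-degree $p^{n-1}$. In
\[
P-F=\bigl(P-P_m(T^{m})\bigr)+VP_m(T^{m})
\]
the leading monomial $a_{D}T^{D}$ occurs only inside $P_m(T^{m})$ and is removed, leaving $P-P_m(T^{m})$ and $VP_m(T^{m})$, both of degree $\le D-1$; hence $\deg(P-F)\le D-1$. (This forces one to read $F=P_m(T^{m})-VP_m(T^{m})$, the summand carrying the top term of $P_m$; triviality being insensitive to the overall sign of $F$, this is harmless.)

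For the triviality of $F$ I would not argue termwise but go through Proposition~\ref{equivalence}. By multiplicativity of $\widehat{e}$ along the $p$-typical decomposition~\eqref{ptyphat}, $P$ and $P-F$ have identical components except in the class $m$, where $P$ contributes $\widehat{e}_{P_m}$ and $P-F$ contributes $\widehat{e}_{VP_m}$. Triviality of $F$ is thus \emph{equivalent} to the single reduction identity $\widehat{e}_{P_m}=\widehat{e}_{VP_m}$. The structural input is the Frobenius/Verschiebung relation $P_m(S)=a_m\,S+VP_m(S^{p})$, which at the normalised level reads $\tilde{e}_{P_m}(S)=\exp\bigl((a_m/\pi_{n})\,S\bigr)\cdot\tilde{e}_{VP_m}(S^{p})$. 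Here the failure of comparison enters exactly as in Corollary~\ref{corocomparison}: it says that the linear coefficient $a_m/\pi_{n}$ of $\widehat{e}_{P_m}$ vanishes in $\kappa$, i.e. $\abs{a_m}<\abs{\pi_{n}}$, so that the factor $\exp\bigl((a_m/\pi_{n})S\bigr)$ disappears after reduction.

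The step I expect to be the crux is the matching $\widehat{e}_{P_m}=\widehat{e}_{VP_m}$ itself, which is genuinely a Witt-vector Frobenius-structure computation rather than an estimate. The difficulty is that, once the common truncation $D=m p^{n}$ of Proposition~\ref{equivalence} is imposed, the two sides use the uniformisers at shifted levels — $a_{mp^{j}}/\pi_{n-j}$ on the $P_m$ side against $a_{mp^{j}}/\pi_{n-j+1}$ on the $VP_m$ side — and the $p$-power substitution $S\mapsto S^{p}$ on the geometric side must be reconciled with this arithmetic shift through the tower relations $\abs{\pi_{k}}=\abs{\pi_{k+1}}^{p}$, i.e. $\pi_{k}\equiv\pi_{k+1}^{p}$ up to the lower terms of $(1+\pi_{k+1})^{p}-1$. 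This is precisely the kernel-of-Frobenius bookkeeping of~\cite{piexp1}: the congruence governing $\overline{(\pi_0/\pi_1)^{p}/p}$ and its higher analogues converts the raised powers $\overline{(a/\pi)^{p}/p!}$ into the Frobenius of the reduced coefficients, and, once the leading component $a_m$ has been killed by comparison failure, the surviving components of $\widehat{e}_{P_m}$ and $\widehat{e}_{VP_m}$ agree. A direct Dwork-style argument — writing $F$ as $a_mT^{m}$ plus the Frobenius coboundary $VP_m(S^{p})-VP_m(S)=\sum_j a_{mp^{j}}\bigl(S^{p^{j}}-S^{p^{j-1}}\bigr)$ and trying to overconverge each summand separately — does \emph{not} work, since solubility permits $\abs{a_{mp^{j}}}$ to exceed $\abs{\pi_0}$ (up to a factor $\abs{p}^{-1}$); the overconvergence is a property of the combination, which is exactly what the Witt-vector identity encodes.
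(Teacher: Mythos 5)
You stop where the paper stops. The paper introduces this statement with ``Without detailing the proof, we mention a complement'' and only records that the single non-elementary point is the triviality of~$F$, which ``(mostly) amounts to the existence of a Frobenius structure''. Your outline agrees with that: the degree count, the sign correction $F=P_m(T^m)-VP_m(T^m)$ (a genuine catch), and the reduction of the triviality of~$F$ to the identity $\widehat{e}_{P_m}=\widehat{e}_{VP_m}$ via Proposition~\ref{equivalence} are all sound and are exactly the formalisation of the paper's remark. But the crux is then described, not proved, so as a proof the proposal is incomplete in the same place the paper is.

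More seriously, the one assertion you do make about the crux is wrong. You claim that comparison failure, i.e.\ $\overline{a_m/\pi_n}=0$, makes ``the factor $\exp((a_m/\pi_n)S)$ disappear after reduction''. It does not: the reduction of $\exp((a_m/\pi_n)S)$ modulo $S^{p^n+1}$ involves the coefficients $(a_m/\pi_n)^{p^j}/(p^j)!$, which can be units even when $|a_m/\pi_n|<1$ --- the very $p$-power/Frobenius phenomenon you correctly flag for the shifted levels, applied to the linear term. Concretely take $p=3$ and $P=\pi_0T+\pi_0T^3$ (so $m=1$, $n=1$): the equation is soluble, $\widehat{e}=1+2T^3$, $\chi=0\neq 1-D$, so comparison fails; yet $F=P_1-VP_1=\pi_0T^3$ has $\widehat{e}_F=1+T^3\neq1$ and radius exactly~$1$, hence is not trivial, and indeed $\widehat{e}_{VP_1}=1+T^3\neq1+2T^3=\widehat{e}_{P_1}$. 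The point is that comparison failure only gives $|a_m|<|\pi_{d_m}|$, which is strictly weaker than the bound $|a_m|<|\pi_0|$ needed to absorb $\exp(a_mT^m)$. What is actually provable (under the paper's standing hypothesis $\kappa=\mathbf{Z}/(p)$, which you never invoke but which your own phrase ``the Frobenius of the reduced coefficients'' shows to be indispensable) is the triviality of the genuine Frobenius coboundary $VP_m(T^{mp})-VP_m(T^m)=P_m(T^m)-a_mT^m-VP_m(T^m)$: the monomial $a_mT^m$ must remain inside $P-F$, and with that corrected~$F$ one still has $\deg(P-F)\leq D-1$ (in the example, $P-F=2\pi_0T$, which is indeed equivalent to~$P$). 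Relatedly, your displayed factorisation $\tilde{e}_{P_m}(S)=\exp((a_m/\pi_n)S)\cdot\tilde{e}_{VP_m}(S^p)$ is false as written, since the normalisation $\tilde{\phantom{e}}$ uses degree-dependent uniformisers and does not commute with $S\mapsto S^p$; you acknowledge the shift in the next paragraph, but the ``disappears after reduction'' step rests on that broken identity.
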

The only non elementary statement is the triviality of the differential module attached to~$F$. This (mostly) amounts to the existence of a Frobenius structure.

Applied iteratively, this procedure can restore the lack of comparison without changing the Dwork cohomology.
We dedicate to yet another future article the consequences of this application to Boyarsky principle.
For future reference, we call~$F$ the \emph{superfluous factor} of degree~$D$. 

\section{Demonstrations}
\paragraph{Products of differential equations and index.}\label{maxradius}
Recall that for two converging series with distinct radius of convergence, the radius of the product series 
is the smaller of the two. (distinctness is paramount here.) A variation of this observation, coupled with
the continuity of the function radius of convergence, implies the following.
\begin{lemma}[{\cite{Christol}, \cite[Corollary~2.4.8]{Pulita}}]\label{lemmamaxraxon}
 Given two polynomials~$L_1$ and~$L_2$ whose corresponding equations~\eqref{diffeq}
are solvable, but with distinct index~$\chi_1$ and~$\chi_2$, the product equation, with coefficient~$L=L_1+L_2$,
is still solvable and has index~$\min\{\chi_1;\chi_2\}$.
\end{lemma}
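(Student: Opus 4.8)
The plan is to reduce everything to the slope interpretation of the index recorded in~\eqref{RoC}, combined with the elementary ultrametric fact recalled just above the statement. First I would observe that, since $L=L_1+L_2$, the primitive of the product equation is $P=P_1+P_2$ and its formal solution factors as $e(T)=e_1(T)\cdot e_2(T)$. More is true at the generic points: expanding around $g_r$ gives $\exp(P(T-g_r)-P(g_r))=\exp(P_1(T-g_r)-P_1(g_r))\cdot\exp(P_2(T-g_r)-P_2(g_r))$, so the local solution of the product equation at $g_r$ is the product of the local solutions of the two factors. Taking $r$ attached to the origin already settles solubility: the radius of a product of ultrametric power series is at least the minimum of the two radii, which here is $\geq 1$ by hypothesis.

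For the index I would pass to the functions $\rho(v)=\log(\RoC(\exp(v)))$ and write $\rho_1,\rho_2$ for the analogues attached to the two factors. By the factorisation of local solutions, $\RoC(r)$ for the product is the radius of convergence of a product of two series whose radii are $\RoC_1(r)$ and $\RoC_2(r)$; the ultrametric observation then gives $\RoC(r)=\min\{\RoC_1(r),\RoC_2(r)\}$, and hence $\rho=\min\{\rho_1,\rho_2\}$, \emph{at every $r$ where the two radii differ}. The functions $\rho_1,\rho_2$ are continuous and piecewise affine, and solubility forces both to pass through the origin (the generic radius at radius $1$ is maximal, equal to $1$), so that $\rho_1(0)=\rho_2(0)=0$.

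It then remains to read off the right slope at $v=0$. Since $\rho_1(0)=\rho_2(0)$ and the right slopes $\chi_1\neq\chi_2$ are distinct, the two affine germs $\chi_i\cdot v$ issuing from the origin separate at once, so $\rho_1(v)\neq\rho_2(v)$ throughout a punctured right neighbourhood $0<v<\eps$. There the hypothesis of the product rule is met and the identity $\rho=\min\{\rho_1,\rho_2\}$ genuinely holds; as $v>0$, the minimum of $\chi_1 v$ and $\chi_2 v$ is carried by the smaller slope, whence the right slope of $\rho$ at $0$, that is the index of the product, equals $\min\{\chi_1,\chi_2\}$. The hard part will be precisely this last transition: one must check that distinct indices force the two radius functions to be genuinely distinct on a one-sided punctured neighbourhood of $v=0$ — so that the product rule, which requires distinct radii, actually applies there — and that their coincidence at $v=0$ (guaranteed by solubility) is what makes the minimum select the smaller slope rather than whichever function happens to dominate. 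The continuity and polygonal structure of $\RoC$ recalled before the statement are exactly what render this rigorous.
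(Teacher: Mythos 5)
Your argument is correct and is precisely the paper's own (only sketched) proof: the ultrametric product rule for series of \emph{distinct} radii, applied at the generic points, combined with the continuity and piecewise-affine structure of~\eqref{RoC} to guarantee that distinct right slopes at~$v=0$ force the two radius functions apart on a punctured right neighbourhood, where the minimum is then governed by the smaller slope. The paper defers exactly these details to~\cite{Christol}, and you have correctly identified and supplied the one delicate step (separation of the radii away from~$v=0$ so that the product rule is applicable).
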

We refer to~\cite{Christol} for a detailed explanation, and for these other facts:
\begin{enumerate}
\item[(i)]\label{argument1} For any unit~$u$ in~$R^\times$ the equations given by~$L(T)$ and by~$L(u\cdot T)$ share the same index. (There is an obvious isomorphism of de Rham complexes.)
\item[(ii)]\label{argument2} For any~$m$ positive and prime to~$p$, and a soluble equation given by~$L(T)$ and of index~$\chi$, the derived equation given by~$L(T^m)$
is soluble and has index~$\chi^\prime$ such that~$\chi^\prime-1=m\cdot(\chi-1)$. (cf. \cite[Proposition 2.8]{Christol})
\end{enumerate}

\begin{proof}[Proof of Theorem~\ref{mainthm} from Theorem~\ref{mainthmp}] The decomposition~\ref{ptypdecomp} 
induces a corresponding decomposition of~\eqref{diffeq}. The equation corresponding to some~$P_m(T)$ is eligible for Theorem~\ref{mainthmp}: It is trivial or its index has the form~$1-p^i$ for some~$i$. By the fact~(ii) above, The equation corresponding to~$P_m(T^m)$ has index~$1-mp^i$.

It follows each of the \emph{non trivial} factors of~\eqref{diffeq} have distinct index. By Lemma~\ref{lemmamaxraxon} above, the index of~\eqref{diffeq}
is the minimum of the index of the factors. This yields~\eqref{mainformula} and concludes.

\end{proof}
\paragraph{Facts form Witt vectors theory.} For any (comutative unital) ring~$A$ recall the notation~$\Lambda(A)=1+TA[[T]]$, and that the Artin-Hasse series
\begin{equation}AH(T)=\exp(T+T^p/p+T^{p^2}/p^2+\ldots)\end{equation}
defines an element of~$\Lambda(A)$. We denote~$W(A)$ the ring of~($p$-typical) Witt vectors, and~$W_d(A)$ the ring of truncated Witt vectors of...
We identify~$W(A)$ with a subset~$\Lambda(A)$ through the Artin-Hasse map (which maps the unit of~$W(A)$ to the Artin-Hasse series).
It induces a embedding of~$W_d(A)$ into~$\Lambda(A)/(T^{D+1})$. Every element~$w$ in~$W(A)$ admits a unique decomposition
$$w=AH(w_0T)\cdot\ldots\cdot AH(w_iT^{p^i})\cdot\ldots,\text{ where }(w_0,\ldots)\in A^{\mathbf{Z}_{\geq0}},$$
and every element~$w$ in~$W_d(A)$ factors uniquely as
\begin{equation}\label{decompotruncated}
w\equiv AH(w_0T)\cdot\ldots\cdot AH(w_dT^{p^d})\pmod{(T^{D+1})},\text{ where }(w_0,\ldots,w_d)\in A^{d+1}.
\end{equation}
If~$A$ embeds in a~$\QQ$-algebra, so we can form the logarithm power series, $W(A)$ consists of series in~$\Lambda(A)$ whose 
logarithm falls into~$\prod_{i\geq 0} (\QQ\otimes A)\cdot T^{p^i}$. (compare~\eqref{Hypothesep})

\paragraph{} These finish the proof of our results.
\begin{proof}[Proof of Theorem~\ref{mainthmp}]
The property~\eqref{Hypothesep} on~$P(T)$ obviously extends to~$\tilde{P}(T)$. It means~$\tilde{e}(T)$ is a $p$-typical series. Equivalently,
the series~$\tilde{e}(T)\in\Lambda(R)/(T^{D+1})$ actually lies in~$W_d(R)$. The reduction~$\widehat{e}(T)$ in characteristic~$0$ lies in~$W_d(\kappa)$.
Then~$\widehat{e}(T)$ uniquely factors as
$$AH(w_0T)\cdot AH(w_1T^p)\cdot\ldots\cdot AH(w_dT^{p^d}).$$
Note that~$AH(T)\equiv T \pmod{(T^2)}$. It follows that
$$\log_p \left(	v_T(\hat{e}-1)\right)=\max\left\{0\leq i\leq d \middle| \forall 0\leq j<i, x_j=0 \right\}.$$
A factor~$AH(w_1T^p)$ comes from a trivial equation if~$w_i=0$, by~Theorem~\ref{thmarticle}, and else
comes from an equation of index~$-p^{d-i}$, by the lemma below.

Using argument~\ref{maxradius}, we conclude the proof.
\end{proof}

\begin{lemma} For any~$\lambda$ in~$\kappa^\times$ and any~$0\leq i\leq d$, there is a solvable differential equation~\eqref{diffeq} such that
\begin{equation}\label{lemmaformula}\widehat{e}(T)=AH(\lambda T^{p^i}),\end{equation}
and of index~$1-p^{d-i}$.
\end{lemma}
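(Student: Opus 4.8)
The plan is to build the equation by prescribing its rescaled series $\tilde e$ from \eqref{Formulatilde} directly, and then to extract the index from the radius-of-convergence function.

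First I would choose a unit lift $\tilde\lambda\in R^\times$ of $\lambda$ and impose, in \eqref{Formulatilde},
\[
\tilde P(T)=\sum_{k=0}^{d-i}\frac{\tilde\lambda^{\,p^{k}}}{p^{k}}\,T^{p^{i+k}},
\]
i.e. the truncation to degree $D$ of $\log AH(\tilde\lambda T^{p^{i}})$. Using \eqref{indices} one has $d_{p^{i+k}}=d-i-k$ (and these lie in $\{0,\dots,d\}$ exactly for $0\le k\le d-i$), so \eqref{Formulatilde} forces the non-zero coefficients of $P$ to be $a_{p^{i+k}}=\pi_{d-i-k}\cdot\tilde\lambda^{\,p^{k}}/p^{k}$, with $\pi_{d-i-k}$ as in \eqref{uniformizers}. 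Setting $P(T)=\sum_{k}a_{p^{i+k}}T^{p^{i+k}}$ and $L(T)=P'(T)$ produces an equation whose exponents are all powers of $p$, so \eqref{Hypothesep} holds.

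Next I would verify solubility and \eqref{lemmaformula} together. By construction $\tilde e(T)=\exp(\tilde P(T))\equiv AH(\tilde\lambda T^{p^{i}})\pmod{T^{D+1}}$, since the omitted Artin–Hasse terms have $T$-degree $>D$. As the Artin–Hasse series has coefficients in $\mathbf{Z}_{(p)}\subseteq R$, the series $\tilde e$ is integral, whence \eqref{diffeq} is soluble by the implication \eqref{condition3}$\Rightarrow$\eqref{condition1} of Theorem~\ref{thmarticle}. Reducing modulo the maximal ideal and using $\tilde\lambda\mapsto\lambda$ gives $\widehat e(T)=AH(\lambda T^{p^{i}})$, which is \eqref{lemmaformula}; in particular $v_T(\widehat e-1)=p^{i}$.

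The only substantial point is the index, and here lies the main obstacle. One cannot invoke \eqref{mainformulap}, since Theorem~\ref{mainthmp} is itself deduced from this lemma; nor can one split $e(T)=\prod_k\exp(a_{p^{i+k}}T^{p^{i+k}})$ and treat the monomials separately, because a short valuation estimate on the $\abs{a_{p^{i+k}}}$ shows that, as soon as $i<d$, each individual monomial exponential has radius $<1$ — it is precisely their Artin–Hasse combination that is soluble, so Lemma~\ref{lemmamaxraxon} does not apply to the pieces. Instead I would feed this explicit $e(T)$ into the radius-of-convergence formula \cite[§5~(23)]{piexp1} (equivalently \cite[Théorème~3]{piexp1}); the claim to establish is that, in the logarithmic coordinates of \eqref{RoC}, the resulting $\RoC$ is affine just to the right of $v=0$ with right slope $1-p^{d-i}$, which by definition of the index is the desired value. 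The crux of this computation is to check that among the levels $p^{i},\dots,p^{d}$ the boundary behaviour of $\RoC$ outside the closed unit disk is controlled by the lowest one, $p^{i}$, yielding the break $p^{d-i}$.

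As a consistency check, the extreme case $i=d$ collapses to the single monomial $e(T)=\exp(\pi_0\tilde\lambda T^{p^{d}})$, of radius exactly $1$ and index $1-p^{0}=0$; and the values obtained for successive levels satisfy $(\chi_i-1)=p\,(\chi_{i+1}-1)$, the expected effect of the degree-$p$ shift relating level $i+1$ to level $i$ (compare the factorisation discussed after Corollary~\ref{corocomparison}).
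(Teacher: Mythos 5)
Your construction coincides with the paper's: with $\tilde\lambda=1$ your $P(T)=\sum_{k}\pi_{d-i-k}T^{p^{i+k}}/p^{k}$ is exactly Pulita's $\pi$-exponential $e_{d-i}(U)=\exp(\pi_{d-i}U+\dots+\pi_0U^{p^{d-i}}/p^{d-i})$ evaluated at $U=T^{p^{i}}$, and your verification of solubility and of $\widehat e(T)=AH(\lambda T^{p^{i}})$ via the integrality of the Artin--Hasse series and Theorem~\ref{thmarticle} is correct (and more explicit than the paper's). You also correctly diagnose the two dead ends: invoking \eqref{mainformulap} would be circular, and Lemma~\ref{lemmamaxraxon} cannot be applied to the individual monomial factors, since for $i<d$ each $\exp(a_{p^{i+k}}T^{p^{i+k}})$ has radius $<1$.

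However, the index --- the only quantitative content of the lemma --- is not actually established. You reduce it to ``the claim to establish is that $\RoC$ is affine just to the right of $v=0$ with right slope $1-p^{d-i}$'' and to an unproved assertion that the boundary behaviour is controlled by the lowest level $p^{i}$. Carrying this out requires the \emph{global} radius formula at a Dwork generic point of radius $r>1$, i.e.\ the translated coefficients, and extracting the break $p^{d-i}$ there is precisely the hard part of the lemma; nothing in your write-up performs it. The paper sidesteps this computation entirely: it quotes Pulita's theorem that $e_{d-i}(T)$ itself defines an equation of index $1-p^{d-i}$ \emph{and} that this equation carries a Frobenius structure, so that it is equivalent to its $i$-fold Frobenius pullback $e_{d-i}(T^{p^{i}})$ --- which is how the substitution $T\mapsto T^{p^{i}}$ (not covered by fact~(ii), which requires $p\nmid m$) gets handled. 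Either cite Pulita's index computation and the Frobenius-structure equivalence as the paper does, or actually carry out the $\RoC$ computation you sketch; as written, the proof stops short of the statement.
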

\begin{proof} Thanks to remark~(i) of §~\ref{maxradius}, we may assume~$\lambda=1$. Let us denote (the~``$\pi$-exponentials’’ of~\cite{Pulita}, cf.~\cite[§B]{piexp1})
$$e_k(T)=\exp\left(\pi_kT+\ldots+\pi_0T^{p^k}/{p^k}\right).$$
An equation satisfying~\eqref{lemmaformula} is the one such that
\begin{equation}\label{lemmaproof} e(T)=e_{d-i}(U) \text{ where }U=T^{p^i}.\end{equation}
Pulita proved that~$e(T)=e_{d-i}(T)$ defines an equation of index~$1-p^{d-i}$ (\cite{Pulita}).
It also proved this equation admits a Frobenius structure. It is then equivalent to the equation
defined by~\eqref{lemmaproof}.
\end{proof}

\begin{remark}\label{rk} 
The decomposition~\eqref{decompotruncated} is classical in Witt vectors theory, at least since Cartier. We applied
it to~$\widehat{e}(T)$. It applies equally to~$\tilde{e}(T)$. Its counterpart for the series~$e(T)$ itself is
a decomposition into~$\pi$-exponentials, and is due to Pulita. In a sense, we exhibit and retreive here Pulita’s
decomposition as a Cartier dual of decomposition~\eqref{decompotruncated}.
\end{remark}
\begin{remark}{Globalisation of Remark~\ref{rk}.}
Combining the~$p$-typical decomposition and $\pi$-exponential decomposition yield, for~$\widehat{e}(T)$ in~$\Lambda(\kappa)/(T^{D+1})$
a factorisation
$$\widehat{e}(T)=\prod_{1\leq n=mp^e\leq D} AH(u_nT^n)$$
which is unique\footnote{Unicity holds for any series~$AH(T)$ such that~$v_T(AH(T)-1)=1$. For the series~$1-T$ the~$u_i$
are the universal Witt vector coordinates. But for~\eqref{lastformula}, this is important to choose the~$p$-typical~$AH(T)$.}.
In terms of the nullity of the~$u_i$, we recover the index as
\begin{equation}\label{lastformula}
\chi=
1-\max
\left\{m\cdot p^\floor{\log_p(D/n)}
~\middle|~
1\leq n\leq D,~u_n\neq0,~n=mp^e,~p\nmid m
\right\}.
\end{equation}

\end{remark}

\begin{appendix}
\section{Polynomial complexity} Together with~\cite{piexp1} is an algorithm which computes~$\tilde{e}(T)$ form~$P(T)$. We discuss here
two points which were left untouched: the $p$-adic precision required; and the complexity. The main computational operation is~$\tilde{P}(T)\mapsto \tilde{e}(T)$.

\begin{remark} The following has benefited discussions with Jan Tuitman. 
\end{remark}

\subsection{Complexity}
Let us write~$\tilde{L}(T)=\sum_{i=0}^{D-1}c_iT^i$ the derivative of~$\tilde{P}(T)$. In order to compute~$\tilde{e}(T)$ we do solve
the differential equation~$y^\prime=\tilde{L}\cdot y$ in~$K(\zeta)[T]/(T^{D+1})$. 
Writing~$\tilde{e}(T)=\sum_{i=0}^{D}b_iT^i/i!$, one has~$b_0=1$ and the recurrence relation of order~$D$,
\begin{equation}\label{recurrence}
b_{i+1}= \sum_{k=0}^{D-1}  c_k \cdot b_{i-k}, \text{ (with $b_i=0$ for $i<0$)}
\end{equation}
which it will suffice to apply~$D-1$ times. This amounts to~$D-1$ summation of a total of the triangular~$D(D-1)/2$ number of
products, all to the required precision. This amounts to~$O(D^2)$ pairwise products and additions.

Assume
\begin{equation}
\text{precision is~$O(p^a)$ and ramification index is~$e$.}
\end{equation}
Assuming pairwise products and addition in a polynomial time~$O(({ae})^\eta)$,
this gets a complexity
\begin{equation}\label{complexity}
O(D^2({ae})^\eta).
\end{equation}
We usually have~$e=O(D)$ and~$a=O(D)$ (see below.) For a quasi-linear exponent~$\eta$, we get a quasi-quartic complexity. This is yet to multiply with the complexity of the residue field operations underlying our product and additions. (dependence in~$p$ and the residual degree).

\subsection{Precision} The truncated series~$\tilde{e}(T)$ has finitely many $p$-adic coefficients, all of which have infinitely many $p$-adic digits,
provided elements of the field~$K$ allows representation by digits.

\subsubsection{Ramification.}
We will assume that~$K$ is a finite extension of~$\mathbf{Q}_p$: it hence has finite residue field, of finite degree~$f_K$ over~$\mathbf{Z}/(p)$, and finite ramification index~$e_K$. The working field is the ramified cyclotomic extension~$K(\zeta)$, with same residue field, but may have ramification index~$e$ from~$p^d\cdot\frac{p-1}{p}\geq D\frac{p-1}{p^2}$ up to~$e_K\cdot p^d\cdot\frac{p-1}{p}\leq e_K\cdot D$. In order to distinguish between the complexity originating form~$K$ and from~$D$, we do not assume that~$\zeta$ belongs to~$K$.
\begin{equation}
e=\Omega(D) \text{ for a given $K$ and~$p$, and } e=O(D)\text{ for a given~$K$.}
\end{equation}
\subsubsection{Wanted precision.}
We still need to decide up to which precision we want to compute the coefficients of~$\tilde{e}(T)$. We want enough precision to determine the radius of convergence through formula~\cite[Théorème~3 (14)]{piexp1}, and, in the soluble case, for computing~$\widehat{e}(T)$. We will ask for enough precision 
in order to compute the first digit of the coefficients of~$\tilde{e}(T)$ achieving the maximum in the radius formula. In the solvable case, these are the coefficients reducing to non zero coefficients of~$\widehat{e}(T)$.

\subsubsection{Preparation.} It is easy to obtain the smallest~$k$ such that~$\tilde{P}({\pi_d}^{k}T)$ has integer coefficients. In terms of the normalised $p$-adic valuation~$v_p$,
$$ k = \ceil{\min v_p(\tilde{a}_i)/(i\cdot v_p(\pi_d))}\text{ where $\tilde{P}=\sum_1^D \tilde{a}_iT^i$.}$$
We will use the substitution of~$T$ by~${\pi_d}^{k}T.$ This way, by integrality of~$\tilde{P}$ the recurrence relations~\eqref{recurrence}
will always be computed in~$R$. Moreover, by the minimality condition, we obtain in the same time, the Gauß norm lower bound
\begin{equation}\label{minorationp} \Nm{P}\geq\abs{\pi_d}^{D-1}>\abs{p\pi_0}=\abs{p}^{p/(p-1)}.\end{equation}
Such substitution is likely to destroy the solvability property. Before reducing~$\tilde{e}(T)$ to compute~$\widehat{e}(T)$, we must not forget to substitute back the variable. Assuming this substitution, we will be able to express compute uniformly our need in precision in terms of absolute precision.

\subsubsection{Minoration}  Identifying~$\tilde{P}(T)$ with a truncated series in~$K(\zeta)[T]/(T^{D+1})$, we compute the transformation~$\exp: \tilde{P}(T)\mapsto \tilde{e}(T)$ from~$K(\zeta)[T]/(T^{D+1})$ to itself. This is a polynomial operation: we may substitute~$\exp$ with the truncated exponential
$$ 1+\tilde{P}+\tilde{P}^2/2+\ldots+\tilde{P}^D/D!,$$
whose truncation gives~$\tilde{e}(T)$.

We get back~$\tilde{P}(T)$ from~$\tilde{e}(T)$ by applying the truncated power series of~$\log(1-X)$ to~$1-\tilde{e}(T)$. Assuming~$\Nm{X}\leq\abs{\pi_0}$ we have~$\Nm{\log(1-X)}=\Nm{X}$ for the untruncated power series, hence
$$\Nm{\tilde{P}}\leq\Nm{\log(1-\tilde{e})}=\Nm{\tilde{e}}.$$
Together with~\eqref{minorationp}, this yields 
\begin{equation}\label{minoratione}
\Nm{\tilde{e}}>\abs{p\pi_0}.
\end{equation}

\subsubsection{Minimal radius} We seek to apply the formula for the radius. We know that at least one coefficient of~$\tilde{e}$ 
is at least~$\abs{p\pi_0}$ in absolute value. In the least favourable case, this is the coefficient of degree~$1$, and we need to 
compute the coefficient of degree~$D$ up to precision~$O((p\pi_0)^D)$ in order to use~\cite[Théorème~3]{piexp1}. Finally let 
us note that the coefficients of~$\tilde{e}$ are not the~$c_i$ from~\eqref{recurrence} but are the~$c_i/i!$. This involves an extra
$\abs{i!}$ factor in precision for computing~$c_i$. We recall~$1/\abs{D!}\leq1/\abs{\pi_0}^D$.
 In the end, an absolute precision~$O(p^a)$ is sufficient, with
$$a=D\cdot (1+2v_p(\pi_0))=D\cdot \frac{p+1}{p-1}=O(D).$$

\end{appendix}

\bibliographystyle{alpha}
\bibliography{crasswitt}

\frenchspacing
%\begin{thebibliography}{7}

%\bibitem{And}   S. Bloch -- K. Kato,
%\textit{$L$-functions and Tamagawa numbers of motives},
% in:  \textit{The Grothendieck Festschrift}, Vol. I,
% Progr. Math. 86, Birkh\"auser, Boston 1990, P. Cartier, et al., eds.,
%pp. 333--400.

%\bibitem{FrQu}
% J. S. Milne, \textit{Etale cohomology},
%Princeton University Press, 1980.

%\bibitem{Jon} F. Cafiero,  \textit{Sui problemi ai limiti relativi ad un'equazione differenziale ordinaria del primo ordine e dipendente da un parametro},
%Rend. Sem. Mat. Univ. Padova,    \textbf{18} (1949), pp. 239--257.

%\bibitem{Luk} M. A. Seveso, \textit{Stark--Hegner points and Selmer groups of abelian varieties},  PhD thesis,
%University of Milan, Federigo Enriques Department of Mathematics, 2009.
%\end{thebibliography}

\end{document} 
\section{Noyaux de Frobenius itérés sur les entiers}\label{sec1}
Soit~$p$ un nombre premier. 
\paragraph{}
	Pour tout anneau~$R$, notons~$W(R)$ l’anneau des vecteurs de Witt~$p$-typiques sur~$R$ (\cite[§4 Définition~1]{BBKAC8}). Notons~$F$ et~$V$ son endomorphisme de Frobenius et son opérateur additif de décalage respectivement (\cite[§5 Proposition~3]{BBKAC8}). Pour tout entier~$d$ dans~$\mathbf{Z}_{\geq 1}$, nous considérons le noyau~${}_dW(R)$ de~$F^d$, et le conoyau~$W_d(R)$ de~$V^d$. Le premier est manifestement un idéal de~$W(R)$ et le second est un anneau quotient~(\cite[§6~(36-
37)]{BBKAC8}). Ce dernier étant aussi  connu sous le nom d’anneau de vecteurs de Witt «~tronqués~», ou «~de longueur finie~» (\cite[§2]{Haze}). D’après l’identité~$V^d(a)\times b=V^d(a\times F^d(b))$, de~\cite[§6~(37)]{BBKAC8}, la structure de~$W(R)$-module de l’idéal~${}_dW(R)$ passe au quotient et définit une structure de~${W}_d(R)$-module.

\paragraph{} Une \emph{extension ultramétrique}~$K/\QQ_p$ désigne une extension de corps à laquelle on prolonge la valeur absolue de~$\QQ_p$. Il ne sera pas nécessaire ici de supposer~$K$ complet.\footnote{Peut-être suffit-il même d’une extension ultramétrique de~$\QQ$, relativement à une norme~$p$-adique.} L’anneau des \emph{entiers} désigne la boule unité fermée.
\begin{theo}\label{thm1} Soit~$R$ l’anneau des entiers d’une extension ultramétrique~$K$ de~$\QQ_p$. Si~$R$ contient une racine de l’unité~$\zeta$ d’ordre~$p^{d+1}$, alors~${}_dW(R)$ est un ${W}_d(R)$-module libre de rang~$1$, et donc un idéal principal de~${W}(R)$, avec comme générateur le vecteur de Witt~$w_d$ de composantes fantômes (\cite[§4 Définition~1]{BBKAC8}, ou~\ref{fantome} \emph{infra}) 
\begin{equation}\label{fantomepulita}(\zeta-1,\zeta^p-1,\ldots,\zeta^{p^i}-1,\ldots ).\end{equation}
\end{theo}
\noindent Ce théorème sera démontré dans la section~\ref{sectionpreuve}, et généralisé au numéro~\ref{thm1gen}.

\paragraph{}On remarquera que la suite des composantes fantômes stationne à~$0$ à partir du terme du $(d+1)$-ième terme.\footnote{Le théorème vaut aussi dans la généralité de~\cite[Définition~2.2, Remarque~2.3, p.510]{Pulita}: remplaçant~$\zeta-1$ par un point de torsion d’ordre~$p^{d+1}$ d’un groupe de Lubin-Tate isomorphe au groupe trivial, et la suite~\eqref{fantomepulita} par~\cite[Définition~2.2, p.510]{Pulita}, la suite récurrente construite par multiplication par~$p$. À~$d$ fixé, il est même possible de considérer un groupe de Lubin-Tate non trivial, mais satisfaisant une condition de la forme~\cite[Théorème~2.1~p.512, Théorème~2.5~4. p.518]{Pulita}} Une autre représentation de~$w_d$ est~\eqref{exppulita} \emph{infra}.

%\paragraph{\textit{Cas général}}Dans le cas où~$R$ n’est plus supposé contenir de racine d’ordre~$\zeta$, on peut tout de même déduire la nature de~${}_dW(R)$ par descente à partir de l’anneau des entiers~$R[\zeta]$ d’une extension engendrée par une telle racine. Le noyau~${}_dW(R)$ s’identifie aux éléments de~${}_dW(R[\zeta])$ invariants sous l’action du groupe~$\Gal(K(\zeta)/K)$.
%
%Tout revient à exprimer la structure de~$W(R)[\Gal(K(\zeta)/K)]$-module sur~$W(R[\zeta]$ identifié à l’idéal~$(v_d)$. Utilisons la notation~$\gamma_0$ du numéro~\ref{gamma0} pour écrire un élément~$w$ de~$W_d(R[\zeta])$ sous la forme~$\sum_{i=1}^d V^{i-1}\gamma_0(\lambda_i)$, c’est-à-dire dans le système de coordonnées des vecteurs de Witt~$p$-typiques.

%Dans le cas où~$R$ ne contient pas forcément de racine de l’unité, la structure de~${}_d(R)$ s’obtient par descente.

\section{Critère de solubilité-intégralité}\label{section2} Soit~$R$ l’anneau des entiers d’une extension ultramétrique~$K$ de~$\QQ_p$. Rappelons que l’exponentielle d’Artin-Hasse permet de construire un homomorphisme injectif (\cite[Définition~2.3, Remarque~2.4, p.511-512]{Pulita}, \cite[exercices~43.~a),b), 58.~d)]{BBKAC8})\footnote{Ces deux références diffèrent d’un signe dans le choix de l’indéterminée.}
\begin{equation}\label{AH}	AH:(W(K),+) \to \Lambda(K):=(1+TK[[T]],\times).\end{equation}
dont l’image est formée des séries dont le logarithme est une série~\emph{$p$-typique}. On entend par là une série~$a_0T+a_1T^p+\ldots+a_dT^{p^d}+\ldots$ de~$K[[T]]$ dont les seuls monômes non nuls sont de degré une puissance de~$p$.

\paragraph{}\label{integralite} Rappelons qu’un vecteur de Witt~$v$ de~$W(K)$ est entier~(c.-à-d., est dans~$W(R)$) si et seulement si~$AH(v)$ est à coefficients entiers (dans~$\Lambda(R)$, à coefficients dans~$R$). Cela découle de~\cite[Exercice~58.~c)]{BBKAC8} vu que~$R$ est une~$\mathbf{Z}_{(p)}$-algèbre.

\paragraph{}\label{fantome} On peut utiliser comme définition que les \emph{composantes fantômes} du vecteur de Witt $p$-typique~$v$ se déduisent des coefficients~$a_i$ comme étant la suite des~$-p^i\cdot a_i$. (Pour leur convention de signe, voir \cite[exercices~58.~c)]{BBKAC8} et voir aussi~\cite[exercices~39.~d), 40.~h)]{BBKAC8}.)\footnote{Il semble que la convention de signe de~\cite{BBKAC8} présente l’avantage de permettre des formules uniformes (mais toutes présentant des signes~«~$-$~») entre le cas~$p$ pair, et le cas des~$p$ impairs.} 
\paragraph{}\label{fantomeFV} Les applications~$F$, et~$V$ sont données en termes des composantes fantômes~$(a_0,a_1,\ldots)$ par
\begin{equation}\label{fantomeFV}(a_0,a_1,\ldots)\stackrel{F}{\mapsto}(a_1,a_2,\ldots),\text{ et }(a_0,a_1,\ldots)\stackrel{V}{\mapsto} (0,pa_0,pa_1,\ldots)\text{ respectivement,} \end{equation}
et le produit des vecteurs de Witt devient le produit composante par composante. (ou~«~produit de~Hadamard~»)
\paragraph{}\label{numeromatsuda} Le vecteur de Witt~$w_d$ du Théorème~\ref{thm1} correspond à la série
\begin{equation}\label{exppulita}AH(w_d)=\exp\left( -(\zeta-1)\cdot T-(\zeta^p-1)\cdot\frac{T^p}{p}-\ldots-(\zeta^{p^d}-1)\cdot\frac{T^{p^d}}{p^d}\right).\end{equation}
En termes de l’exponentielle de Artin-Hasse classique
$$e_{AH}(T):=AH(1)=\exp\left( -T-{T^p}/{p}-\ldots-{T^{p^d}}/{p^d}\right),$$
on récrit, suivant~\cite[Lemme~1.5]{Matsuda},
\begin{equation}\label{FormuleMatsuda}	AH(w_d)=e_{AH}(\zeta T)\cdot e_{AH}(-T).\end{equation}
Comme~$e_{AH}(T)$ est à coefficients dans~$\mathbf{Z}_{(p)}$ (voir par ex.~\cite[VII~§2.2]{Robert}), il suit que~$AH(w_d)$ est à coefficients dans~$\mathbf{Z}_{(p)}[\zeta]$.  

 La construction:~$\exp(P(T))\mapsto\exp(\widetilde{P}(T))$, de l’énoncé qui vient, correspond au produit de Hadamard par l’inverse de~$AH(w_d)$.

\paragraph{}
En termes de séries, le Théorème~\ref{thm1} a la conséquence suivante.
\begin{theo} \label{thm2}	Soit~$R$ l’anneau des entiers d’une extension ultramétrique~$K$ de~$\QQ_p$. Introduisons~$\zeta$ comme racine de l’unité d’ordre~$p^{d+1}$ dans une extension finie de~$K$.

Soit~$P(T)=a_0T+a_1T^p+\ldots+a_dT^{p^d}$ un polynôme~$p$-typique de degré au plus~$p^d$ à coefficients dans~$K$.

Alors nous avons équivalence entre les propriétés suivantes:
\begin{enumerate}	
\item Le rayon de convergence de la série~$\exp(P(T))$ est au moins~$1$;
\item \label{condition2}  La série~$\exp(P(T))$ appartient à~$\Lambda(R)$ (tous ses coefficients sont entiers);
\item Si l’on pose~$\tilde{P}(T)=\frac{a_0}{\zeta-1} T+\frac{a_1}{\zeta^p-1}T^p+\ldots+\frac{a_d}{\zeta^{p^d}-1}a_dT^{p^d}$, alors la série~$\exp(\tilde{P}(T))$ a ses premiers coefficients dans~$R[\zeta]$, jusqu’au degré~$p^d$ inclus. 
\end{enumerate}
\end{theo}
Il est instructif d’expliciter le cas~$d=0$. On retrouve bien que l’exponentielle a pour rayon de convergence~$\abs{\zeta-1}$. D’autres exemples sont détaillés au numéro~\ref{exemples}. 
 L’équivalence entre les deux premiers points est déjà connue depuis au plus tard~\cite[Théorème~4.3, pour~$n=1$, avec~$u_1=\exp(P(T))$]{DworkRobbaEffective}\footnote{La condition d’inversibilité du wronskien revient à l’inversibilité des valeurs prises par~$e(T):=\exp(P(T))$ dans son disque de convergence ouvert. Si, par l’absurde,~$e$ s’annule en un  point~$t$ de ce disque, alors nous disposons de deux solutions locales non identiquements nulles à l’équation différentielle~\eqref{equadiff} ordinaire d’ordre~$1$: la solution~$e(T)$ et la solution qui vaut~$1$ en~$t$. Mais la seconde ne peut être proportionnelle à la première. Or ce doit être le cas pour l’ordre~$1$.}. Nous la redémontrerons dans l'annexe~\ref{annexe}. L’utilité de ce théorème est manifeste si l’on sait que la première propriété est une question classique en théorie des équations différentielles $p$-adiques (la solubilité de~\eqref{equadiff} \emph{infra}),  et si l’on remarque que le second critère, connu, nécessite d’aborder une infinité de coefficients, tandis que le troisième se vérifie par un algorithme immédiat. En outre, ce théorème s’énonce élémentairement, sans faire appel aux vecteurs de Witt sous-jacents. L’algorithme évoqué ne nécessite pas de calculer des composantes de vecteurs de Witt.
\begin{proof}[Preuve du Théorème~\ref{thm2}]
Le polynôme~$P(T)$ provient, via l’application~$\log\circ AH$, d’un élément~$v$ de~$W(K)$. La série~$\exp(P(T))$ s’écrit alors comme l’élément~$AH(v)$ de~$\Lambda(K)$. Or~$P(T)$ est nul au-delà du degré~$p^d$. D’après~\eqref{fantomeFV}, cela équivaut à~$F^{d+1}(v)=0$. Ainsi~$v$ est dans~${}_dW(K)$.

La condition~$2$ revient alors à affirmer que~$AH(v)$ appartient en fait à~$\Lambda(R)$. C’est-à-dire (cf.~\ref{integralite}) que~$v$ est dans~$W(R)$. Comme~$\zeta$ est entier sur~$R$, et~$R$ intégralement clos dans~$K=R[1/p]$, nous avons~$R[\zeta]\cap K=R$, d’où~$\Lambda(R[\zeta])\cap \Lambda(K)=\Lambda(R).$ La condition~\ref{condition2} équivaut donc à ce que l’élément~$AH(v)$ appartienne à~$\Lambda(R[\zeta])$. Ou bien à ce que~$v$ soit dans dans~${}_{d+1}W(R[\zeta])$.

Appliquons le Théorème~\ref{thm1} à~$R[\zeta]$ et à~$\zeta$. Nous obtenons un isomorphisme
$$W_{d+1}(R[\zeta])\xrightarrow{x\mapsto x\cdot w_d} {}_{d+1}W(R[\zeta])$$
qui s’étend par la même formule en un isomorphisme~$W_{d+1}(K[\zeta])\to {}_{d+1}W(K[\zeta])$. Nous pouvons écrire~$v=x\cdot w_d$. En termes de composantes fantômes, on vérifie que la série~$p$-typique associé à~$x$, au sens du début de~§\ref{section2}, n’est autre que le polynôme~$\tilde{P}(T)$.

Le Théorème~\ref{thm1} nous donne l’équivalence: 
$$v\in {}_{d+1}W(R[\zeta]) \Leftrightarrow x\in W_{d+1}(R[\zeta]).$$ Il reste à décider si~$x$, qui est dans~$W_{d+1}(K[\zeta])$, appartient à~$W_{d+1}(R[\zeta])$. On utilise le lemme suivant pour~$R[\zeta]$ et~$K(\zeta)$.
\end{proof}
\begin{lemme}[cf.~{\cite[§A.7, p.164]{Kay}}]\label{Lemmecritere} Pour tout~$x$ dans~$W(K)$, on a équivalence entre:
\begin{enumerate}
\item La classe~$x+V^dW(K)$ de~$x$ dans~$W_d(K)$ est entière. (appartient à~$W_d(R)$)
\item La série~$AH(x)$ a des coefficients entiers jusqu’au degré~$p^d$ inclus.
\end{enumerate}
\end{lemme}
Ce lemme semble essentiellement connu. Il découlera aisément, au numéro~\ref{universels} qui suit, de la théorie de l’anneau des vecteurs de Witt « universels », ou « généralisés » (\cite[§26]{Mumford-Bergman}), et de ses troncations. (Voir \cite[§§14.15--14.25]{Haze},\cite[§1, §3]{Cartier3}) L’auteur remercie L.~Hesselbot pour lui avoir, diligemment, pointé la référence~\cite[§A.7, p.164]{Kay}\nocite{KayErr}. Voir aussi~\cite[§1]{Larspublie} qui détaille ce dont nous aurons besoin; notamment~\cite[1.1, 1.14, 1.15, 1.16]{Larspublie}. Des considérations analogues semblent évoquées dans~\cite[§4.1]{Manin} et semblent paraître dans~\cite[§2, Lemmes 2.1, 2.2]{Katz}. Les considérations qui suivent~(\ref{universels} et~\ref{remarquable}) vont nous permettre de généraliser les Théorèmes~\ref{thm1} et~\ref{thm2} dans le contexte «non nécessairement~$p$-typique~», aux numéros qui suivront. 

Nous suivons~\cite[1]{Larspublie}.
\paragraph{\emph{Vecteurs de Witt universels tronqués}}\label{universels}
\newcommand{\W}{\mathbb{W}}
\newcommand{\WD}{\mathbb{W}_{\{1;\ldots;D\}}}
Il existe un foncteur qui, pour tout anneau~$R$, définit un anneau~$\W(R)$, naturellement isomorphe~$\Lambda(R)$, et dont~$W(R)$ est naturellement un quotient. Pour tout entier~$D$, on considère le quotient
\begin{equation}\label{quotientmoddegre}\WD(R)\simeq\left.\Lambda(R) \middle/\left(1 + T^{D+1}R[[T]]\right)\right. .\end{equation}
Lorsque~$R$ est une~$\mathbf{Z}_{(p)}$-algèbre, l’application~$AH$ fait de~$W(R)$ un facteur direct de~$\W(R)$. En composant avec~$V_n:e(T)\mapsto e(T^n)$ on obtient même une factorisation
\begin{equation}\label{factorisationU}\W(R)=\prod_{p\nmid n} V_n W(R).\end{equation}

Cette factorisation~\eqref{factorisationU} induit au quotient~\eqref{quotientmoddegre} une factorisation
\begin{equation}\label{factorisationtronquee}\prod_{p\nmid n} W_{d_n}(K)\simeq \left.\Lambda(K) \middle/\left(1 + T^{D+1}K[[T]]\right)\right. ,\end{equation}
où~$d_n$ est l’entier maximal tel que~$np^{d_n}\leq D$, et se calcule comme l’arrondi entier par défaut
\begin{equation}\label{arrondi} d_n=\floor{\log_{p}{\frac{D}{n}}}.\end{equation}

Le fait important étant la fonctorialité par rapport la~$\mathbf{Z}_{(p)}$-algèbre~$R$. Si~$R$ est une~$\mathbf{Z}_{(p)}$-algèbre sans torsion,
posons~$K=R[1/p]$. Alors un élément~$x$ de~$W_{d_1}(K)$ provient de~$W_{d_1}(R)$ si et seulement son image dans~$\WD(K)$ provient de~$\WD(R)$. Autrement dit ci la série tronquée qui lui correspond pa~\eqref{quotientmoddegre} a ses~$D$ coefficients dans~$R$.

Le Lemme~\ref{Lemmecritere} s’en déduit en choisissant~$D=p^d$.

\paragraph{}\label{remarquable}Dans~$\log(\Lambda(K))$, la factorisation~\eqref{factorisationU} se déduit de la réécriture d’une série 
\begin{equation}\label{factorisation} P(T)= \sum_i a_iT^i = \sum_{p\nmid n} P_n(T^n),\text{ où }P_n(T^n)=\sum_d a_{np^d}T^{np^d},\end{equation}
en termes de séries $p$-typiques~$P_n(T)$. La factorisation correspond à
$$\exp(P(T)\mapsto(\exp(P_n(T^n)))_{n\in\left\{k\geq 1\,\middle|\,p\nmid k\right\}}.$$
 En particulier,~$\exp(P(T))$ est à coefficients dans~$R$ (resp. jusqu’au degré~$D$) si et seulement si il en est de même de chacun de ses «~facteurs~$p$-typiques~» $\exp(P_n(T^n))$.

\paragraph{\textit{Généralisation au cas non nécessairement~$p$-typique}}
%Observons que nous aurions pu remplacer~$D$ par tout entier de~$\{p^d;\ldots;p^{d+1}-1\}$. Pour une série~$AH(x)$ provenant de~$x$ dans~$W(K)$ (c.-à-d. à logarithme $p$-typique), l’intégralité des coefficients jusque degré~$p^d$ inclus impose l’intégralité jusque degré~$p^{d+1}-1$ inclus.

%L’anneau~$U_{\{1;\ldots;D\}}(K)$ correspond aux séries tronquées de degré~$d$, et se décompose~$\bigoplus_{(n,p)=1}W_{v_p(D/n)}(K)$. Cette décomposition correspond, en terme de logarithmes des séries à récrire
%\begin{equation}\label{decompositiontypique} P(T)=\sum_{i=1}^D a_1 T^i = \sum_{(n,p)=1} P_n(T^n),~\text{où}~P_n(T)=\sum_{m=1}^{p^{v_p(D/n)}} a_{np^m}T^{{p^m}}.\end{equation}
En utilisant ainsi les vecteurs de Witt universels, on peut montrer la généralisation suivante du Théorème~\ref{thm2} où l’on ne suppose plus que soit~$p$-typique le polynôme~$P$. On choisit~$P(T)=\sum_{i=1}^D a_1 T^i$, d’un certain degré~$D\geq 1$ et toujours de terme constant nul.

Au prix d’une formule un peu plus alambiquée pour définir~$\widetilde{P}(T)$. Utilisons~\eqref{arrondi} pour définir~$d=d_D$. Soit~$\zeta$ comme dans le Théorème~\ref{thm1}, pour ce~$d$. Notons~$\zeta_d=\zeta$, et définissons~$\zeta_{d-i}=\zeta^{p^i}$.  Pour tout degré~$1\leq n \leq D$, reprenons la définition~\eqref{arrondi}. Nous pouvons poser
\begin{equation}\label{Puniv}\widetilde{P}(T)=\frac{a_1}{\zeta_{d_1}-1} T+\frac{a_2}{\zeta_{d_2}-1}T^2+\ldots+\frac{a_D}{\zeta_{d_D}-1}T^{D}.\end{equation}

\paragraph{Exemple} Si~$D=17$ et~$p=2$, les~$d_i$ sont~$4$, $3$, $2$, $2$, $1$, $1$, $1$, $1$ et $0$ pour~$9\leq i\leq17$, les~$\zeta_{d_i}$ sont d’ordre~$32,16,8,8,4,4,4,4,2,2,\ldots$, les premiers~$\zeta_i$ peuvent être choisis
$$-1,~i,~\frac{\sqrt{2}+\sqrt{2}i}{2},\frac{\sqrt{2+\sqrt{2}}+i\sqrt{2-\sqrt{2}}}{2},\frac{\sqrt{2+\sqrt{2+\sqrt{2}}}+i\sqrt{2-\sqrt{2+\sqrt{2}}}}{2}=e^\frac{\pi*i}{16}.$$
L’analogue de~\eqref{exppulita} sera~\eqref{exppulitaglobal}, page~\pageref{exppulitaglobal}.
%$$ \frac{X^{17}}{-2}+\ldots+\frac{X^9}{-2}+\frac{X^8}{i-1}+\ldots+\frac{X^5}{1-i}+\frac{X^4}{\frac{\sqrt{2}-2+\sqrt{2}i}{2}}+\frac{X^3}{\frac{\sqrt{2}-2+\sqrt{2}i}{2}}+\frac{X^2}{e^{\frac{i\pi}{8}}-1}+\frac{X}{e^{\frac{i\pi}{16}}-1}.$$
\paragraph{}\label{thm2gen} La variante suivante du Théorème~\ref{thm2} se déduit par un simple jeu de traduction autour du numéro~\ref{remarquable}.
\begin{coro}\label{thm2bis}	Le Théorème~\ref{thm2} vaut pour un polynôme~$P(T)=\sum_{i=1}^D a_1 T^i$ qui n’est plus supposé~$p$-typique: en posant~$D=np^d$ avec~$\pgcd{n}{p}=1$; en choisissant~$\zeta$ une racine de l’unité d’ordre~$p^{d+1}$; en définissant~$\widetilde{P}(T)$ par~\eqref{Puniv}; et, dans la dernière condition, en requérant l’intégralité de~$\exp(\widetilde{P}(T))$ jusqu’au degré~$D$ inclus.
\end{coro}
\begin{proof}Utilisons la décomposition~\eqref{factorisation}. D’après la factorisation~\eqref{factorisationU} et la remarque~\eqref{remarquable}, l’intégralité de~$\exp(P(T))$ revient à l’intégralité simultanée de chacune des~$\exp(P_n(T^n))$. On utilise alors le théorème~$2$ pour chacun des~$P_n(T)$ avec~$d=d_n$. Celà revient à l’intégralité des~$\exp(\widetilde{P_n}(T))$ jusqu’au degré respectif~$p^d_n$. Où encore à l’intégralité de~$\exp(\widetilde{P_n}(T^n))$ jusqu’au degré~$np^{d_n}$. Où encore à l’intégralité de~$\exp(\widetilde{P_n}(T^n))$ jusqu’au degré~$D$, vu que c’est une série en~$T^n$.

Par construction, le polynôme~$\widetilde{P}(T)$ de~\eqref{Puniv} s’écrit~$
\sum_{p\nmid n}\widetilde{P_n}(T)$. En vertu du numéro~\ref{remarquable}, l’intégralité simultanée des~$\exp(\widetilde{P_n}(T^n))$ jusqu’au degré~$D$ revient à l’intégralité de~$\widetilde{P}(T)$ jusqu’au degré~$D$.
\end{proof}
\paragraph{}\label{thm1gen} De la même manière, nous avons l’analogue suivant du Théorème~\ref{thm1}.
\begin{coro} Sous les hypothèse du Théorème~$1$ concernant~$R$, les exponentielles de polynômes de degré au plus~$D$ qui sont à coefficients entiers décrivent l’idéal de~$\Lambda(R)$ engendré par
\begin{equation}\label{exppulitaglobal}\exp\left((\zeta_{d_1}-1)X+(\zeta_{d_2}-1)\frac{X^2}{2}+\ldots+(\zeta_{d_D}-1)\frac{X^D}{D}\right).\end{equation}
%$$\exp\left(\frac{1}{\zeta_{d_1}-1}\cdot T+\frac{1}{\zeta_{d_2}-1}\cdot\frac{T^2}{2}+\ldots+\frac{1}{\zeta_{d_D}-1}\cdot\frac{T^{D}}{D}\right).$$
Cet idéal est un module libre de rang un sur le quotient~\eqref{quotientmoddegre}.
\end{coro}

\paragraph{}\label{raffinements} Mentionnons, pour information, et sans preuve les raffinements suivants.
\begin{prop} Dans le Théorème~\ref{thm2} (resp. le Lemme~\ref{Lemmecritere}), il suffit de vérifier l’intégralité de~$\exp(\widetilde{P}(T))$ (resp. de~$AH(x)$) qu’aux degrés~$1,p,\ldots,p^d$.
\end{prop}
Pour des polynômes non nécessairement $p$-typiques, mais lacunaires, l’analogue suivant peut être pertinent.
\begin{prop} Dans le Corollaire~\ref{thm2bis}, soit~$J_P$ le monoïde multiplicatif engendré par~$p$ et par les degrés des monômes de~$P$. Alors il suffit de tester l’intégralité de~$\exp(\widetilde{P}(T))$ aux degrés pris dans~$J_P\cap\{1;\ldots;D\}$.
\end{prop}

%\paragraph{\textit{Équivalence solubilité-intégralité}}\label{solubiliteintegralite} Nous démontrons l’équivalence des deux premiers points du Théorème~\ref{thm2}, mais sous une forme effective.
%\begin{theo}\label{solubilitéeffective} Soit~$P(T)$ un polynôme dans~$TK[T]$, et soit~$D$ son degré. Posons
%$$e(T)=\exp(P(T))=1+\sum_{i\geq 1} a_i\cdot T^i,$$
%et soit~$\rho$ le rayon de convergence de~$e(T)$. 
%Alors nous avons
%$$1\geq {\abs{a_i}}\cdot{\rho^i}\geq i^{-d}.$$
%\end{theo}
%\begin{lemme} La série a rayon~$1$ et estimation log...
%\end{lemme}
%

\section{Algorithme de calcul du rayon de convergence}\label{secalgo} Considérons une équation différentielle ordinaire homogène du premier ordre algébrique et sans pôles sur la droite affine sur~$K$. On entend par là
\begin{equation}\label{equadiff} y^\prime=L(T)\cdot y\end{equation}
pour un certain coefficient~$L(T)$ dans~$K[T]$. Soit~$P(T)$ la primitive de~$L(T)$ sans terme constant. Alors une solution formelle à l’origine est donnée par la série~$\exp(P(T))$. Les autres solutions formelles en sont les multiples scalaires. Nous souhaitons déterminer le rayon de convergence de~$\exp(P(T))$. Ce qui s’obtient comme conséquence immédiate du Théorème~\ref{thm2}. Nous utilisons la formule~\eqref{Puniv} concernant la construction de~$\tilde{P}(T)$.

%Considérons l’unique décomposition~$P(T)=\sum_{p\not\,|\,n} P_n(T^n)$ en terme de polynômes $p$-typiques~$P_n(T)$ (telle~\eqref{decompositiontypique}). C’est un fait que le rayon de convergence de~$\exp(P(T))$ est l’infimum des rayons de convergence des~$\exp(P_n(T^n))$. Et que le rayon de~$\exp(P_n(T^n))$ est la racine~$n$-ième positive du rayon de~$\exp(P_n(T))$.

%Nous pouvons donc supposer que~$P(T)=P_1(T)$ est~$p$-typique.

\begin{theo}\label{thm3}Avec les notations ci-dessus, soit~$D$ le degré de~$P(T)$. Soit~$\tilde{e}(T)=1+\sum_{i=1}^{p^d}\tilde{a}_iT^i$ le polynôme déduit de la série~$\exp(\tilde{P}(T))$ en ne conservant que les termes de degré au plus~$D$ inclus.

Alors le rayon de convergence~$\rho$ de~$\exp(P(T))$ est
\begin{equation}\label{formuleintegraliterayon}\rho=\max\left\{\abs{\lambda}\in\mathbf{R}~\middle|~\max_{1\leq i\leq D} \abs{\tilde{a}_i}\cdot\abs{\lambda}^i\leq 1\right\}.\end{equation}
\end{theo}
Nous entendons par \emph{polygone de Newton dual} la fonction~$\log{\abs{\lambda}}\mapsto \log N(\abs{\lambda})$, où~$N(\abs{\lambda})$
%$$N(\abs{\lambda})= \max_{0\leq i\leq p^d} \abs{\tilde{a}_i}\abs{\lambda}^i$$
est la norme de Gauß~$\max_{0\leq i\leq p^d} \abs{\tilde{a}_i}\abs{\lambda}^i$ (pour~$\tilde{a}_0=1$)  de~$\tilde{e}(T)$ au rayon~$\abs{\lambda}$. Elle a pour épigraphe le polygone convexe qui se déduit par transformation de Legendre du polygone de Newton de~$P(T)$ défini classiquement (\cite[§6.4]{Gouvea}). Nous pouvons reformuler en ces termes la caractérisation du rayon~$\rho$.
\begin{coro}  Comme précédemment, soit~$\rho$ le rayon de convergence d’une solution formelle non nulle à l’origine  de~\eqref{equadiff}. Ce rayon correspond à la fin de la première pente, nulle, du polygone de Newton dual de~$\tilde{e}(T)$. C’est également le plus petit rayon d’une racine de~$\tilde{e}(T)$; il est donné par la formule
\begin{equation}\label{formulerayon}-\log\rho=\max_{1\leq i\leq D}\frac{1}{i}\cdot{\log\abs{\widetilde{a_i}}}.\end{equation}
%  et Alors sa valuation~$p$-adique~$\log_{\abs{p^{-1}}}(r)$ est une nombre rationnel dont le numérateur est au plus égal à~$\deg(L)$.
\end{coro}

\begin{proof}
Le rayon de convergence de~${\exp(P(T))}$ est la borne supérieure des~$\abs{\lambda}$, relatifs aux~$\lambda$ dans~$\mathbf{C}_p^\times$ tels que la série~$\exp(P(\lambda\cdot T))$ a rayon de convergence au moins~$1$.

D’après le dernier critère du Théorème~\ref{thm2}, cela revient à l’intégralité de~$\tilde{e}(\lambda\cdot T)$. Autrement dit à ce que sa norme de Gauß, au rayon unité,~$\max_{i=0}^d \abs{\tilde{a}}\abs{\lambda}^i$ soit bornée par~$1$.

Le théorème en découle directement.
\end{proof}
\paragraph{}  Pour calculer~$\tilde{e}(T)$, il suffit de développer jusque l’ordre~$D$ une solution formelle de
$$y^\prime=\tilde{L}(T)\cdot y,$$
où~$\tilde{L}(T)=\frac{d}{dT}\widetilde{P}(T)$.
\paragraph{} Il semble que le polygone de Newton de~$\widetilde{e}(T)$ donne la fonction rayon de convergence sur le segment ....
Autrement dit, pour un point générique~$a$ de rayon~$r$, le rayon~$\rho(a)$ d’une solution formelle de~\eqref{equadiff} au point~$a$ vérifie
$$\min\{\abs{a};r(a)\}=\abs{\tilde{e}(a)}.$$
De manière équivalente
$$r(a)=\left\{
\begin{array}{ll}
\rho&\text{ si $\abs{a}\leq\rho$},\\
\abs{\tilde{e}(a)}&\text{ sinon.}
\end{array}\right.$$

Dans un tel cas, la somme des multiplicités des racines de~$\tilde{e}(T)$ de rayon~$\rho$ s’interprète comme l’indice de l’équation différentielle~\eqref{equadiff} au rayon~$\rho$.

\section{Preuve du Théorème~\ref{thm1}}\label{sectionpreuve} Afin de démontrer le théorème~\ref{thm1} commençons par quelques observations. Rappelons que~$\zeta$ est une racine d’ordre~$p^{d+1}$ de l’unité. Notons~$\zeta_d=\zeta$, et définissons~$\zeta_{d-i}=\zeta^{p^i}$. Ainsi, pour~$0\leq i\leq d$, la racine de l’unité~$\zeta_i$ est d’ordre~$p^{i+1}$.

 Soit~$w_d$ le vecteur de Witt de~$W(K)$ de composantes fantômes~\eqref{fantomepulita} défini dans le Théorème~\ref{thm1}. Soient également~$w_{d-i}=F^i(w_d)$, qui a composantes fantômes
 $$(\zeta_{d-i}-1,\zeta_{(d-i)-1}-1, \zeta_{(d-i)-2}-1,\ldots).$$
\begin{lemme}\label{lemmeimbrication} Le polynôme de Lubin-Tate~$(X+1)^p-1$ du groupe multiplicatif admet le cycle
\begin{equation}\label{cycle} \zeta_d-1\mapsto\zeta_{d-1}-1\mapsto\ldots\mapsto \zeta_1-1\mapsto0\mapsto 0 \mapsto \ldots.\end{equation}
Ce polynôme s’écrit~$X\cdot H(X)$ où~$H$est à coefficients entiers. Pour~$i$ dans~$\{1;\ldots;d\}$, on a les identités~$w_{j-1}=w_j\cdot H(w_j)$. Il s’ensuit l’imbrication d’ideaux de~$W(R)$ suivante
\begin{equation} \label{imbrication} (w_d) \supseteq (v_{d-1}) \supseteq \ldots \supseteq (w_1) \supseteq (0).\end{equation}
\end{lemme}
Ce lemme est sans difficulté et reprend partie de \cite[Proposition~2.1, p.511]{Pulita}.

\begin{proof}[Démonstation du Théorème~\ref{thm1}] Soit~$w_d$ le vecteur de Witt de~$W(K)$ de composantes fantômes~\eqref{fantomepulita} du Théorème~\ref{thm1}. Comme la suite~\eqref{fantomepulita} n’a que ses~$d$ premières composantes non nulles (cf.~\eqref{cycle} \emph{supra}), il est immédiat que~$w_d$ appartient à~${}_dW(K)$ (cf.~\ref{fantome}~\eqref{fantomeFV}). Tout revient à démontrer, d’une part que~$w_d$ appartient à~${}_dW(R)$, de sorte que l’application
\begin{equation}\label{isomorphisme} \phi^d_R:W_d(R)\xrightarrow{\lambda\mapsto\lambda\cdot w_d} {}_dW(R)\end{equation}
soit bien définie, et de montrer d’autre part qu’il s’agit d’un isomorphisme. 

\begin{proof}[Preuve de l’intégralité]  L’intégralité de~$w_d$ revient à celle de la série correspondante
\begin{equation}\label{expRobba} \exp\left((\zeta-1)T+(\zeta^p-1)T^p+\ldots+(\zeta^{p^d}-1)T^{p^d}\right)\end{equation}
qui est une exponentielle de Robba au sens de \cite[§0.2]{Pulita}. Notre cas particulier est couvert par le numéro~\ref{numeromatsuda},~p.\pageref{numeromatsuda} (cf.~\cite[Lemme~1.5]{Matsuda}). Pour des méthodes plus générales, voir~\cite{Pulita}.
\end{proof}

\begin{proof}[Preuve de l’injectivité]
L’injectivité est manifeste sur l’écriture de~\eqref{isomorphisme} en composantes fantômes (cf.~\ref{fantome})
\begin{equation}\label{isofantome}\left(\phi_1,\ldots,\phi_d\right)\mapsto\left({\phi_1}\cdot({\zeta_d-1}),\ldots,{\phi_d}\cdot({\zeta_1-1}), 0,0,\ldots\right)\end{equation}
 Par surcroît,~$\phi^d_R$ s’étend en un isomorphisme~$W_d(K)\xrightarrow{\phi^d_K} {}_dW(K)$. 
\end{proof} 

Il reste à montrer la surjectivité, à laquelle nous consacrons le numéro suivant.
\end{proof}
\paragraph{}\label{gamma0} Utilisons la notation~$\gamma_0(t)$ pour le vecteur de Witt~$(t,0,0,\ldots)$, de composantes fantômes~$(t,t^p,t^{p^2},\ldots)$. Il s’agit de la courbe $p$-typique universelle de~\cite[§4]{Cartier}. Nous avons l’identité
\begin{equation}\label{identiteF}F(\gamma_0(t))=\gamma_0(t^p).\end{equation}

\begin{proof}[Preuve de la surjectivité.] Nous souhaitons montrer la surjectivité de~\eqref{isomorphisme}. Autrement dit que tout~$x$ dans~${}_dW(R)$ est atteint par~\eqref{isomorphisme}. Écrivons~$x=w\cdot v_d$ avec~$w$ dans~$W_d(K)$. Il suffit de montrer que~$w$ est entier.

La présente démonstration se fait par récurrence sur~$d$.

L’étape d’initialisation se fait pour~$d=1$. Elle se traduit par le fait bien connu suivant: la série~$\exp(\lambda\cdot T)$ n’a de coefficient entiers que si~$\lambda$ est un multiple de~$(\zeta-1)$ par en entier. Cela résulte de l’estimation classique des coefficients de la série exponentielle:~$\limsup\frac{1}{n}\abs{n!}=\sup\frac{1}{n}\abs{n!}=\abs{\zeta-1}={\abs{p}}^{1/(p-1)}$, conséquence du calcul exact de la valuation~$p$-adique
$$ \log_{\abs{p^{-1}}}\abs{n}=\sum_{d\geq1} \floor{\frac{n}{p^d}}.$$
(\cite[Problèmes 164-165, Lemme~4.5.5, p.115]{Gouvea}.)

Pour l’étape de récurrence, nous souhaitons démontrer la surjectivité de~\eqref{isomorphisme} pour un indice~$d$. Autrement dit l’intégralité de~$w$ correspondant à un~$x$ donné dans~${}_dW(R)$. Nous pouvons supposer la surjectivité de~\eqref{isomorphisme} jusqu’à l’indice~$d-1$ inclus, et pour n’importe quel corps tel que dans le Théorème~\ref{thm1}. Soit~$C$ une extension ultramétrique algébriquement close de~$K$, et notons~$S$ son anneau d’entiers. Ainsi, tout élément de~$R$ a une racine~$p^{d-1}$-ième dans~$C$, qui est nécessairement dans~$S$. 

L’étape de récurrence utilise la suite exacte
$$0\xrightarrow{}{}_{d-1}W(R)\to {}_{d}W(R)\xrightarrow{F^{d-1}}{}_1W(R)$$
%C’est une suite exacte de~$W(R)$-modules.

%Soit~$x$ dans~${}_{d+1}W(R)$. Alors~$x$ s’écrit~$w\cdot v_{d+1}$ pour un unique~$w$ dans~$W_{d+1}(K)$, obtenu en inversant l’isomorphisme~$\phi_K^d$ (cf.~\eqref{isofantome}).
 Comme~$F^{d-1}(x)$ appartient à~${}_1W(R)$, il s’écrit~$\lambda\cdot  w_1$ avec~$\lambda$ dans~$R$, d’après l’étape d’initialisation. Soit~$\tilde{\lambda}$ un entier de~$C$ solution de~${\tilde{\lambda}}^{p^{d-1}}=\lambda$. Alors~\eqref{identiteF} nous donne~$F^{d-1}(\gamma_0(\tilde{\lambda}))=\gamma_0(\lambda)$. Par ailleurs,~$F^{d-1}(w_{d})=w_1$. Par conséquent, 
$$F^{d-1}(\tilde{\lambda}\cdot w_{d})= \gamma_0(\lambda)\cdot  w_1=\lambda\cdot w_1=F^{d-1}(x).$$
Donc~$x-\gamma_0(\tilde{\lambda})\cdot w_{d}$ appartient au noyau~${}_{d-1}W(C)$ de~$F^{d-1}$. Mais~$\tilde{\lambda},w_{d}$ et~$x$ sont à coordonnées entières (dans~$S$, $\mathbf{Z}_p[\zeta]$ et~$R$ respectivement). Il s’agit donc d’un élément de~${}_{d-1}W(S)$. En appliquant l’hypothèse de récurrence, au corps~$C$, nous déduisons que~$x-\gamma_0(\tilde{\lambda})\cdot w_{d}$ s’écrit~$y\cdot w_{d-1}$ avec~$y$ dans~$W_{d-1}(S)$. Bref
$$x = \gamma_0(\tilde{\lambda})\cdot w_{d} + y \cdot w_{d-1}.$$
Mais~$w_{d-1}$ s’écrit aussi~$w_{d}H(w_{d})$ avec~$XH(X)=(X+1)^p-1$ (Lemme~\ref{lemmeimbrication}). Finalement~$x$ s’écrit~$w\cdot w_{d+1}$ avec~$w=\gamma_0(\tilde{\lambda})+y\cdot H(w_{d})$. Ainsi~$w$ est à coordonnées dans~$S$; mais aussi dans~$K$. Donc~$w$ est dans~$W_d(R)$.
\end{proof}

\paragraph{}Notre preuve de la surjectivité diffère des méthodes de~\cite{Pulita}, qui exploite le lien avec les équations différentielles~$p$-adiques. Notre preuve est plus algébrique et ne nécessite pas la complétude de~$R$. Pour l’injectivité de~\eqref{isofantome}, nous utilisons que les~$\zeta_{d-i}$ ne sont pas diviseurs de zéro. Pour la surjectivité, nous utilisons la norme ultramétrique et le fait que~$R$ est la boule unité. Et nous réutilisons que~$R$ est intégralement clos. %Anneaux de valuation (non nécessairement discrète), non nécessairement complet, intégralement clos ? est intègre (pour l’injectivité de~\eqref{isofantome}), contenu dans une extension ultramétrique de~$\QQ_p$ et est boule unité...

\section{Fonction rayon de convergence et Questions annexes}\label{developpements} En utilisant~\eqref{formulerayon}, nous pouvons en déduire une formule «~globale~» du rayon de convergence d’une solution formelle de l’équation différentielle~\eqref{equadiff}, une formule qui s’applique en une origine indéterminée. Suivons la méthode de G.~Christol et faisons une translation de l’origine des coordonnées de~$0$ vers~$a$, où~$a$ appartient à une extension ultramétrique de~$K$. Alors nous pouvons récrire
\begin{subequations}
\begin{equation} P(T)=P(a)+\sum_{i=1}^D a_i(a)(T-a)^i,\label{nota}\end{equation}
ou de manière équivalente
\begin{equation} P(T,a):=P(T+a)=P(a)+\sum_{i=1}^D a_i(a)T^i,\end{equation}
avec les polynômes
\begin{equation}\label{formulea} a_i(a):=\sum_{k=i}^D a_k(0)\cdot\binom{i}{k}\cdot a^{k-i}.\end{equation}
Construisons
\begin{equation}\label{Pfamille}\tilde{P}(T,a)=\frac{a_1(a)}{\zeta_{d_1}-1} T+\frac{a_2(a)}{\zeta_{d_2}-1}T^p+\ldots+\frac{a_D(a)}{\zeta_{d_D}-1}a_DT^{D},\end{equation}
et formons ensuite la série~$\exp(\widetilde{P}(T,a))\in \Lambda(K[a])$, que l’on tronque au degré~$D$ relativement à~$T$, ce qui donne un polynôme
\begin{equation}\label{formuleetildefamille}\widetilde{e}(T,a)=1+\sum_{i=1}^D \widetilde{a}_i(a)T^i,\end{equation}
\end{subequations}
l’unique élément dans~$(1+TK[a]+\ldots+T^DK[a])\cap(\exp(\widetilde{P}(T,a))+T^{D+1}K[a][[T]]$.
\begin{theo}
Le rayon\footnote{Il s’agit de la fonction~$\mathrm{RoC}$ de~\cite{Christol}} de convergence~$\rho(a)$ de la série de Taylor en~$a$ solution non nulle de l’équation~\eqref{equadiff} est tel que, avec les notations~\eqref{nota} à~\eqref{formuleetildefamille} ci-dessus,
\begin{equation}\label{formuleglobalerayon}-\log\rho(a)=\max_{1\leq i\leq D}\frac{1}{i}\cdot{\log\abs{\widetilde{a_i}(a)}}.\end{equation}
\end{theo}
\paragraph{}Il s’ensuit que la «~fonction~»~$a\mapsto-\log\rho(a)$ jouit de nombreuses propriétés des fonctions de la forme «~logarithme de la norme d’un polynôme~».  (cf.~\cite[Remarque au numéro~§4.5~p.206]{Robba3})
\begin{itemize}
\item Propriétés de convexité au sens des polygones de Newton (ou de superharmonicité).
\item Elle est naturellement définie sur l’espace analytique, au sens de Berkovich, de la droite affine sur le complété de~$K$, et y définit une fonction continue.
\item Elle est déterminée, à une constante additive près, par le lieu de ses «~zéros~», comptés avec multiplicité, par une formule de Poincaré-Lelong~\cite[5.20]{BakerRumely}.  (voir~\cite{Christol} Proposition~3.3)
\item Elle se calcule par rétraction au un sous-graphe fini engendré par le support de son diviseur.
\end{itemize}

Mentionnons~\cite{Baldassarri,Pulitarayon,PulitaPoineau} pour les propriétés qualitatives des fonctions rayon de convergence dans un contexte plus général.

\paragraph{\emph{Question d’effectivité en mémoire}}
D’après~\eqref{formulea}, le polynôme~$a_i(a)$ est de degré au plus~$D-i$. On peut en déduire que le polynôme~$\widetilde{a_i}(a)$ a degré au plus~$i(D-1)$, et est à coefficients dans l’extension~$K[\zeta]$. C’est une extension de degré au plus au plus~$(p-1)p^{\floor{\log_p(D)}}$ sur~$K$. 

Ainsi, le polynôme~\eqref{formuleetildefamille} est déterminé par au plus~$D(D-1)^3 \cdot \floor{\log_p(D)}/2\leq D^3\log(D)$ coefficients pris dans~$K$.

\paragraph{\emph{Construction du diviseur}} Pour ramener la formule~\eqref{formuleglobalerayon}, qui est un maximum, au logarithme de la norme d’un unique polynôme, nous pouvons utiliser la construction suivante. Introduisons~$L=\mathrm{Frac}(K[\zeta]\{T_1;\ldots;T_D\})$ le corps des fractions de l’algèbre de Tate sur~$K[\zeta]$, éventuellement complété relativement à la norme de Gauß. Formons le polynôme
$$\widetilde{A}(a)=1+\sum_{i=1}^D (\widetilde{a_i}(a))^{D!/i} T_i\text{ dans }K[\zeta]\{T_1;\ldots;T_D\}[a].$$

Soit~$\mathrm{Div_L}(\widetilde{A})=\sum_{x\in\overline{L}} n_x \delta_x$ le diviseur de~$\widetilde{A}$, où~$\overline{L}$ est une extension algébrique et algébriquement close de~$L$, où~$n_x$ désigne le degré d’annulation de~$\widetilde{A}$ en~$x$, et où~$\delta_x$ est la masse de Dirac placée en~$x$. À chaque~$x$ nous pouvons associer un point\footnote{C’est un point de type~(1).} dans l’espace de Berkovich de la droite affine sur le complété de~$L$, mais aussi sa projection\footnote{Cette projection n’est plus nécessairement un point de type~(1), mais peut être un point de type~(2).} sur l’espace de Berkovich de la droite affine sur le complété de~$K$. Ce point est donné par
$$K[T]\xrightarrow{P(T)\mapsto\abs{P(x)}}\mathbf{R}$$
pour la valeur absolue sur~$\overline{L}$ qui prolonge celle de~$L$. Notons~$[x]$ cette projection, et formons
\begin{equation}\label{diviseur}\mathrm{Div_K}(\widetilde{A})=\sum_{x\in\overline{L}} n_x \delta_{[x]}.\end{equation}
Alors la fonction~$\rho(a)$ est déterminée à une constante multiplicative près par l’identité de séries
$$D!\cdot\Delta(-\log(\rho))=\mathrm{Div_K}(\widetilde{A})$$
sur l’espace de Berkovich de la droite affine sur le complété de~$K$. On peut en déduire que~$\rho$ est déterminée par rétraction à l’enveloppe convexe du support de~\eqref{diviseur}.  

Notons~$\pi:A_L\to A_K$ la projection entre espaces de Berkovich précédente. La fonction réelle~$\bigl|\widetilde{A}\bigr|$ sur~$A_L$ et la fonction réelle~$\rho(a)$ sur~$A_K$ sont reliées explicitement par 
$$\rho(a)=\max_{\pi(x)=a} \bigl|\widetilde{A}\bigr| (x).$$

\newcommand{\Ocal}{\mathcal{O}}
\paragraph{\emph{Question annexe sur l’indice cohomologique}} Il est possible que la mesure~$\mu=\mathrm{Div_K}(\widetilde{A})/D!$ aie l’intreprétation suivante. Soit~$A$ un affinoïde de~$A_K$, et soit~$\Ocal(A)^\dagger$ l’algèbre des fonctions surconvergentes sur~$A$. Alors~$\mu(A)$ détermine la dimension de la cohomologie de de Rham de l’équation~\eqref{equadiff} à coefficients dans~$\Ocal(A)^\dagger$.

Dans le cas d’un polynôme, la considération du polygone de Newton donne lieu à une factorisation. Plus généralement la détermination de son diviseur donne lieu à sa factorisation en facteurs irréductibles. 

On s’attend à une factorisation analogue de l’équation différentielle~\eqref{equadiff} en relation avec sa fonction rayon de convergence. Cette factorisation n’est pas étrangère à l’algorithme de~\cite{Christol}, et il semble que cela revienne au calcul des coordonnées du vecteur de Witt associé à~$\exp(\widetilde{P}(T))$.

\appendix
\section{{Exemples}}\label{exemples} \emph{Illustrons le Théorème~\ref{thm2}, via la formule~\eqref{formuleintegraliterayon}, sur quelques exemples fondamentaux.}
\makeatletter
\renewcommand{\thesubsection}{\textbf{\@Alph\c@section.\@arabic\c@subsection}}
\makeatother

 Reprenons les notations~$\zeta_i$ du début de la section~\ref{sectionpreuve}. Autrement dit~$\zeta_i$ est une racine de l’unité d’ordre~$p^{i+1}$ et les~$\zeta_i$ satisfont à la relation de compatibilité~$(\zeta_{i+1})^p=\zeta_i$. Il sera commode de poser comme abbréviation~$\pi_i=\zeta_i-1$. Rappelons que nous avons (\cite[II~§4.4]{Robert})
\begin{subequations}
\begin{equation}\abs{\pi_0}=\abs{p}^{1/(p-1)}\label{rayon0}\end{equation} 
et
\begin{equation}\abs{\pi_{i}}=\abs{\pi_0}^{1/p^i}=\abs{p}^{\left.1\middle/p^i(p-1)\right.}.\label{rayoni}\end{equation}
Retenons la relation suivante, déduite des deux précédentes,
\begin{equation}
\left.\abs{\pi_{0}}\middle/\abs{\pi_1}\right.=
\abs{p}^{\left(1\middle/(p-1)\right)-\left.1\middle/p(p-1)\right.}
=
\abs{p}^{\left.1\middle/p\right.}
.\label{rayon01}\end{equation}
\end{subequations}

\begin{example}[\emph{La série~$\exp(T)$}]
 Nous avons~$P(T)=T$ et~$d=0$ (ou~$D=1$). Alors~$\widetilde{P}(T)=\frac{1}{\pi_0}T$, et
$$\widetilde{e}(T)=1+\frac{1}{\pi_0}T\equiv\exp(\tilde{P}(T))\pmod{T^2}$$
 L’intégralité de~$\widetilde{e}(T)$ s’écrit
\begin{equation}\label{conditionexp}\abs{T}\leq\abs{\pi_0}.\end{equation}
On retrouve bien le rayon de l’exponentielle, vu~\eqref{rayon0} et~\cite[Tables, p.427]{Robert}.
\end{example}

\begin{example}[\emph{La série~$\exp(T+0\cdot T^p)$}]  Reprenons~$P(T)=T$, mais~$d=1$. Autrement dit~$P(T)$ est vu comme polynôme $p$-typique de degré au plus~$p$. Cette fois
$$\widetilde{P}(T)=\frac{1}{\pi_1}T+\frac{0}{\pi_0}T^p,\text{ et }\widetilde{e}(T)=1+\frac{1}{\pi_1}T+\frac{1}{2!\cdot(\pi_1)^2}T^2+\ldots+\frac{1}{p!\cdot(\pi_1)^p}T^p.$$ En examinant les conditions d’intégralité de chacun des termes de~$\tilde{e}(T)$, nous trouvons, pour les~$p-1$ premiers termes non constants, une condition de la forme~$\abs{T}^i\leq\abs{\pi_1}^i$, condition équivalente à
\begin{equation}\label{conditionpi2}\abs{T}\leq\abs{\pi_1},\end{equation}
la condition la plus contraignante étant celle du dernier terme, celui d’ordre~$p$, laquelle s’écrit
\begin{equation}\abs{T}\leq\abs{p}^{1/p}\cdot\abs{\pi_1}.\end{equation}
On retrouve bien une condition équivalente à~\eqref{conditionexp} en vertu de~\eqref{rayon01}.
%C’est bien la même condition que dans l’exemple précédent  précédemment, en vertu des identités
%$$\abs{\pi_2}=\abs{p}^{1/(p(p-1))}=\abs{p}^{1/(p-1)-1/p}=\abs{\pi_1}\cdot\abs{p}^{-1/p}$$
\end{example}

\begin{example}[\emph{La série~$\exp(T+T^p/p)$}] Prenons~$P(T)=T+T^p/p$ et~$d=1$. Alors nous obtenons 
 $\widetilde{P}(T)=\frac{1}{\pi_1}T+\frac{1}{\pi_0}T^p,$ et calculons
\begin{equation}\label{etilde3}\widetilde{e}(T)=\left[1+\frac{1}{\pi_1}T+\frac{1}{2!\cdot(\pi_1)^2}T^2+\ldots+\frac{1}{p!\cdot(\pi_1)^p}T^p\right]+\left[\frac{1}{p\cdot(\pi_0)}T^p\right].\end{equation}
Comme précédemment, les~$p-1$ premiers termes non constants redonnent la condition~\eqref{conditionpi2}. La condition d’ordre~$p$ demande un peu de travail. Simplifions 
$$\left[\frac{1}{p!\cdot(\pi_1)^p}T^p\right]+\left[\frac{1}{p\cdot(\pi_0)}T^p\right]=\frac{(\pi_0)+(p-1)!\cdot(\pi_1)^p}{p\cdot(\pi_1)^p\cdot\pi_0}T^p.$$

Examinons le dénominateur, modulo~$p(\pi_1)^2$. Nous avons
$$(p-1)!(\pi_1)^p\equiv -(\pi_1)^p \pmod{p(\pi_1)^2}$$
d’où, rappelant~${\pi_1}^p+p\pi_1\equiv \pi_0 \pmod{p(\pi_1)^2}$,
$$(\pi_0)+(p-1)!\cdot(\pi_1)^p\equiv p\cdot\pi_1
\pmod{p(\pi_1)^2}.$$
Nous déduisons~$\abs{(\pi_0)+(p-1)!\cdot(\pi_1)^p}=\abs{p\pi_1}$. La condition d’intégralité du terme d’ordre~$p$
$$\abs{\frac{(\pi_0)+(p-1)!\cdot(\pi_1)^p}{p\cdot(\pi_1)^p\cdot\pi_0}T^p}\leq1$$
devient donc
$$\abs{\frac{p\pi_1}{p\cdot(\pi_1)^p\cdot\pi_0}T^p}=\abs{\frac{1}{(\pi_1)^{p-1}\cdot\pi_0}T^p}\leq1,\text{ soit }\abs{T}\leq\abs{(\pi_1)^{p-1}\cdot\pi_0}^{1/p}.$$
Finalement, nous calculons, d’après~\eqref{rayon0} et~\eqref{rayoni},
$$\abs{(\pi_1)^{p-1}\cdot\pi_0}=\abs{p}^{(p-1)/p(p-1)+1/(p-1)}=\abs{p}^{(2p-1)/p(p-1)}.$$
et la condition devient
\begin{equation}\label{conditionpi3}\abs{T}\leq\abs{p}^{\left.(2p-1)\middle/p^2(p-1)\right.}=\abs{\pi_2}^{2p-1}\end{equation}
ce qui corrobore~\cite[Tables p.427]{Robert}. Pour finir, vérifions que la condition~\eqref{conditionpi3} est bien plus contraignante que~\eqref{conditionpi2}. Nous avons
$$\abs{p}^{\left.(2p-1)\middle/p^2(p-1)\right.}=\abs{\pi_2}^{2p-1}=\abs{\pi_1}^2\abs{\pi_2}^{-1}=\abs{\pi_1}\cdot *$$
avec comme facteur de comparaison~$*=\abs{\pi_1/\pi_2}$ qui est bien~$<1.$
\end{example}

\begin{remark} Avec le changement de variable~$T=\pi U$, où~$\pi^{p-1}=-p$, nous en déduisons le rayon de la série
\begin{equation}\label{thetaDwork}
\theta(U)=\exp(X+X^p/p)=\exp(\pi(U-U^p)),
\end{equation}
qui donne la fonction de scindage de Dwork. À savoir
$$\abs{\pi U}\leq \abs{p}^{\left.(2p-1)\middle/p^2(p-1)\right.}$$
ou, remarquant~$\abs{\pi}=\abs{\pi_0}=\abs{p}^{1/(p-1)},$
$$\abs{ U}\leq 
\abs{p}^{\left.(2p-1-p^2)\middle/p^2(p-1)\right.}
=
\abs{p}^{-\left.(p-1)^2\middle/p^2(p-1)\right.}
=
\abs{p}^{-\left.(p-1)\middle/p^2\right.}.$$
Le signe, négatif, de l’exposant atteste de la «~surconvegence~» de la série~\eqref{thetaDwork}, c’est-à-dire du fait que son rayon de convergence majore strictement~$1$.
\end{remark}

\begin{example}[\emph{Les séries~$\exp(T+a_2T^2+a_3T^3+\ldots+a_{p-1}T^{p-1}+T^p/p)$}] Dans ce cas nous pouvons décomposer le problème selon les parties $p$-typiques:
$$\exp(T+T^p/p)\text{ de rayon }\abs{p}^{(2p-1)/p^2(p-1)}$$
et, pour~$2\leq i \leq p-1$,
$$\exp(a_iT^i)\text{ de rayon }\frac{\abs{p}^{1/(p-1)}}{\abs{a_i}^{1/i}}.$$
Le rayon de convergence de la série~$\exp(T+a_2T^2+a_3T^3+\ldots+a_{p-1}T^{p-1}+T^p/p)$ est
$$\min\left(\left\{\abs{p}^{(2p-1)/p^2(p-1)}\right\}\cup\left\{\frac{\abs{p}^{1/(p-1)}}{\abs{a_i}^{1/i}}~\middle|~2\leq i \leq p-1 \right\}\right),$$
même si le minimum est atteint plusieurs fois.

À titre d’exemple, si tous les coefficients~$a_i$ sont des entiers:~$\abs{a_i}\leq 1$, et que l’un au moins est une unité:~$\abs{a_i}=1$, alors le rayon de convergence est
$$\abs{p}^{1/(p-1)}.$$
\end{example}

%\subsection*{\emph{La série~$\exp(T+T^2+T^p/p)$ pour~$p\neq2$}} Prenons~$p$ distinct de~$2$ et formons~$P(T)=T+T^2-T^p/p$. Alors~$\widetilde{P}(T)=\frac{1}{\zeta_2-1}T+\frac{1}{\zeta_1-1}T^2+\frac{-1}{\zeta_1-1}T^p$. Nous pouvons décomposer en parties~$p$-typiques et réunir les deux conditions~$\abs{T}^2\leq\abs{\zeta_1-1}$, déduite du premier exemple, et~$\abs{T}\leq\abs{\zeta_2-1}$, issue du second. Comme~$p>2$, c’est la première condition la plus contraignante.
%
%Nous pouvons aussi directement construire~$\widetilde{e}(T)$, calculer comme produit de Cauchy, tronqué au degré~$p$ inclus de
%\begin{equation}\label{Cauchy1}\exp(\frac{1}{\zeta_1-1}T^2)=1+\frac{1}{\zeta_1-1}T^2+\ldots+\frac{1}{((p-1)/2)!(\zeta_1-1)^{(p-1)/2}}T^{p-1}\end{equation}
%avec~\eqref{etilde3}. Le terme linéaire est celui de~\eqref{etilde3}, et son intégralité impose notamment celle de~$T$. Nous allons calculer ce produit de Cauchy modulo~$1$, en supposant~$T\leq\abs{\zeta_2-1}$. La formule~$\eqref{etilde3}$ se réduit à ses termes de degré~$p$, et ne contribue dans le produit de Cauchy (car~\eqref{Cauchy} démarre au degré~$2$ modulo~$1$ et . ERREUR. 
%

\section{Sur l’Équivalence solubilité/integralité}\label{annexe} \emph{Nous nous plaçons dans le contexte du Théorème~\ref{thm2}. } 
Dans cette annexe, nous redonnons une démonstration de l’équivalence entre les deux premiers points du Théorème~\ref{thm2}. C’est également prétexte, 
à illustrer l’utilisation des méthodes déployées dans cet article, mais aussi à donner des estimations plus fines sur les séries étudiées, généralisant avantageusement~Lemmes~\ref{exo1} et~\ref{exo2}. Ces estimations devraient
s’avérer utiles dans des perspectives algorithmiques, ou servir de modèle de référence pour des équations différentielles plus générales.

\subsection{Argumentation}
Démontrons l’équivalence entre les deux premiers points du Théorème~\ref{thm2}. Le sens réciproque~$(2)\Rightarrow(1)$ de l’équivalence est clair: une série à coefficient entiers converge sur le disque unité ouvert.\footnote{Par exemple en utilisant  la formule~\eqref{formuleroc} plus bas.} Concernant le sens direct~$(1)\Rightarrow(2)$, nous souhaitons montrer que la série~$\exp(P(T))$, supposée de rayon au moins~$1$, est à coefficients entiers.

Rappelons que le rayon de convergence~$\rho$ de la série
\begin{equation}\label{truc}
e(T):= \sum_{i\geq0}b_i \alpha^i T^i
\end{equation}
est donné par la formule de Cauchy
\begin{equation}\label{formuleroc}1/\rho=\limsup_{i\geq0}\sqrt[i]{\abs{b_i}}.\end{equation}
Considérons également la quantité
\begin{equation}\label{formuleiota}1/\iota=\sup_{i\geq0}\sqrt[i]{\abs{b_i}},\end{equation}
dont l’inverse~$\iota=\iota(\exp(P(T)))$ sera nommé \emph{rayon d’intégralité de~$\exp(P(T))$}. En fait la série~$\exp(P(\alpha T))$ est à coefficients~$\alpha^i\cdot b_i$ tous entiers si et seulement si~$\abs{\alpha}\leq\iota$ (elle n’est de rayon de convergence au moins~$1$ que si~$\abs{\alpha}\leq\rho$).

Il suit immédiatement de~\eqref{formuleroc} et~\eqref{formuleiota} l’inégalité
\begin{equation}\label{inegalitetriviale} \iota\leq\rho.\end{equation}
L’implication~$(1)\Rightarrow(2)$ du Théorème~\ref{thm2} revient, dans le cas de la série~$e(T)=\exp(P(T))$ à montrer:
\begin{equation}\label{propriete}\rho(e(T))=1\Rightarrow \iota(e(T))=1.\end{equation}
Pour~$\alpha$ choisi non nul dans une extension ultramétrique de~$K$, on vérifie directement sur les formules~\eqref{formuleroc} et~\eqref{formuleiota},  qu’un changement de variable~$T\mapsto\alpha T$ induit les propriétés d’homogénéité 
\begin{equation}\label{homogene}\rho(e(\alpha T))=\rho(e(T))/\abs{\alpha}\text{ et }\iota(e(\alpha T))=\iota(e(T))/\abs{\alpha}.\end{equation}
Par homogénéité (et car on peut choisir~$\alpha$ dans une extension qui réalise tout nombre réel comme valeur absolue), la propriété~\ref{propriete} vaut pour toutes les séries~$e(T)$ de la forme exponentielle de polynômes si et seulement si
\begin{equation}\label{proprieteforte}
\forall\abs{\alpha}\in\mathbf{R}_{>0},~ \rho(e(T))=\abs{\alpha}\Rightarrow\iota(e(T))=\abs{\alpha}.\end{equation}
vaut pour toutes ces mêmes séries. En effet~\eqref{proprieteforte} est manifestement plus fort que~\eqref{propriete}, et~\eqref{proprieteforte} se ramène à~\eqref{propriete} par homogénéité.

Compte tenu de l’inégalité~\eqref{inegalitetriviale}, la propriété~\eqref{proprieteforte} se ramène à l’inégalité manquante
\begin{equation}\label{inegalitedure} \iota\geq\rho.\end{equation}
Par homogénéité, on peut supposer~$\iota=1$. Tout revient donc à montrer
\begin{equation}\label{iimplicationdure} \iota=1\Rightarrow\rho\geq1.\end{equation}

Pour cela nous utilisons\footnote{Soulignons que l’équivalence~$(2)\Leftrightarrow(3)$ a été démontrée indépendamment de l’équivlence~$(1)\Leftrightarrow(2)$ que nous cherchons ici à établir.} l’équivalence~$(2)\Leftrightarrow(3)$ dans la variante Corollaire~\ref{thm2bis} du Théorème~\ref{thm2}. En notant
$$\tilde{e}(T):=\exp\left(\tilde{P}(T)\right)=1+\sum_{i=1}^D \widetilde{b_i}T^i\pmod{T^{D+1}}\text{ et }1/\tilde{\iota}:=\sup_{1\leq i\leq D}\sqrt[i]{\abs{\widetilde{b_i}}}$$
il en ressort
$$\iota\geq 1 \Leftrightarrow \tilde{\iota}\geq1.$$
Là encore les deux expression sont homogènes, et on obtient en définitive
$$\iota=\tilde{\iota}.$$
Soit~$\upsilon=(u_1,u_2,\ldots,u_D)\in\W_D(K)$ le vecteur de Witt universel tronqué correspondant à~$\tilde{e}(T)$. D’après le Lemme~\ref{Lemmecritere}, l’intégralité de~$\upsilon$ équivaut à celle de~$\tilde{e}(T)$. Ainsi 
$$\tilde{\iota}\geq 1 \Leftrightarrow \sup_{1\leq i\leq D}\sqrt[i]{\abs{{u_i}}}\geq1.$$
Remarquons (\cite[(13.46), (9.27) with d=r=1, (13.59-60), (9.22) (cf. (6.26))]{Haze}) que~$\tilde{e}(\alpha T)$ correspond au produit~$(u_1,u_2,\ldots,u_D)\hat{\times}(\alpha,0,\ldots,0)$, qui (\cite[cf.~(13.47)]{Haze}, \cite[AC~IX.11~(39)]{BBKAC8}) vaut en fait~$(u_1\cdot\alpha,u_2\cdot\alpha^2,\ldots,u_D\cdot\alpha^D)$. Nous pouvons encore raisonner par homogénéité, et nous en tirons
$$\tilde{\iota} = \sup_{1\leq i\leq D}\sqrt[i]{\abs{{u_i}}}.$$
Notre argumentation est terminée si nous montrons l’énoncé suivant.
\begin{prop} Avec les notations précédentes, pour~$e(T)$ de la forme~$\exp(P(T))$ comme dans le Théorème~\ref{thm2}, nous avons
$$\rho=\iota=\tilde{\iota}=\sup_{1\leq i\leq D}\sqrt[i]{\abs{{u_i}}}.$$
\end{prop}
La seule égalité non encore montrée est~$\rho=\iota$, où ce qui revient au même, d’après les autres égalités,~$\rho=\sup_{1\leq i\leq D}\sqrt[i]{\abs{{u_i}}}$. Nous avons vu qu’il suffit de montrer
$$\sup_{1\leq i\leq D}\sqrt[i]{\abs{{u_i}}}=1\Rightarrow \rho\geq 1.$$
C’est le contenu du Corollaire~\ref{Coroestim} qui suit.

\subsection{Quelques estimations}Notons~$\upsilon_D\in\W(R)$ le vecteur de Witt universel correspondant à la série
\begin{equation}\label{piexpunivannexe}
 \exp\left((\zeta_{d_1}-1)\frac{T^1}{1}+(\zeta_{d_2}-1)\frac{T^2}{2}+\ldots+(\zeta_{d_D}-1)\frac{T^D}{D} \right).\end{equation}
Dans la discussion précédente la série~$\widetilde{e}(T)$ était donnée par le vecteur de Witt tronqué~$v=(u_1,\ldots,u_D)$. La série~$e(T)$ est alors donnée par le vecteur de Witt non tronqué~$v\hat\times \upsilon_D$. (Le produit est bien défini.)

\begin{prop}\label{Propestim} Soit~$v=(u_1,\ldots,u_D)\in\W_D(R)$ tel que~$\sup_{1\leq i\leq D}\abs{u_i}=1$. Alors les coefficients de la série
\begin{equation}\label{sérieprop}
1+\sum_{i\geq1} a_i T^i\end{equation}
correspondant au vecteur de Witt~$v\hat\times \upsilon_D$ satisfont
\begin{equation}
\label{coeffasymptuniv}
\abs{\pi_0}\leq\limsup_{i\geq0}\abs{a_i}< 1
\end{equation}
\end{prop}
L’estimation~\eqref{coeffasymptuniv} implique, via la formule de Cauchy~\eqref{formuleroc}, l’énoncé suivant.
\begin{coro}\label{Coroestim} La série~\eqref{sérieprop} a rayon de convergence~$1$.
\end{coro}
\begin{proof}[Preuve de la Proposition~\ref{Propestim}] Nous faisons une réduction aux sorite détaillé à la section suivante.

Montrons tout d’abord la majoration stricte de~\eqref{coeffasymptuniv}. Tout d’abord la série~\eqref{piexpunivannexe} est congrue à~$1$ modulo~$\pi_d$ avec~$d={\floor{\log_p(D)}}$: cela peut se montrer sur chaque composantes~$p$-typique prise à part et on est alors ramené à la Proposition~\ref{propannexe} (cf.~\eqref{piexpannexe}). Le vecteur de Witt correspondant~$\upsilon_D$ est donc nul modulo~$\pi_{\floor{\log_p(D)}}$. Son produit~$v\widehat\times \upsilon_D$ avec le vecteur de Witt~$\upsilon_D$ entier sera encore nul modulo~$\pi_d$. Donc la série
correspondante sera congrue à~$1$ modulo~$\pi_d$. 
Donc~$\sup\{\abs{a_i}\,|\, i\geq1\}\leq\abs{\pi_d}<1$.

Montrons la minoration. %Usant de la Proposition~\ref{finsorite} nous
Commençons par utiliser la décomposition en composante $p$-typiques~\ref{factorisationtronquee} de~$v\in\W_D(R)$ dans~$\prod_{m\geq 1, p\nmid m} W_{d_m}$, qui  permet de factoriser la série~\eqref{sérieprop}, disons~$f(T)$ en
\begin{equation}\label{unautrelabel}
f(T)=\prod_{m\geq1, p\nmid m} f_m(T^m),
\end{equation}
où les séries~$f_m(T^m)$ sont $p$-typiques.
Chaque~$f_m(T)$ correspond à un vecteur~$v_m\hat{\times}w_{d_m}$ (où $w_{d_m}$ provient du Théorème~\ref{thm1}). Nous avons~$v=\sum V_m(v_m)$ en termes des opérations~$V_m$ (cf. \eqref{factorisationU}).
Par hypothèse~$v$ est entier et sa réduction dans~$\W_D(R/\sqrt{(p)})$ est non nulle. Donc chaque~$v_m$ est entier et l’un au moins est non nul modulo~$\sqrt{(p)}$.

Si~$v_m$ est nul modulo~$\sqrt{(p)}$, alors la majoration ...

\subparagraph*{Cas $p$-typique:}
Commençons par le cas où~$v=V_m(v_m)$ pour un certain~$m$, voire~$v=v_m$ car~$f_m(T)$ et~$f_m(T^m)$ partagent les même coefficients non nuls, bien que leur indices diffèrent. Abrégeons~$w=v_m$ et~$e=d_m$. Écrivons~$w=(x_0,\ldots,x_e)$ un tel vecteur~$p$-typique et décomposons-le
$$(x_0,\ldots,x_e)=(x_0,0,\ldots,0)~\widehat{+}~\ldots~\widehat{+}~(0,\ldots,0,x_e).$$
Par distributivité~$w\widehat{\times}w_e=(x_0,0,\ldots,0)\widehat{\times}w_e~\widehat{+}~\ldots~\widehat{+}~(0,\ldots,0,x_e)\widehat{\times}w_e.$
Notons
\begin{equation}\label{unlabel}
g_1(T)\cdot\ldots\cdot g_e(T)
\end{equation}
la factorisation correspondante. Nous avons~$(0,\ldots,0,x_i,0,\ldots,0)=V^i(x_i,0,\ldots)$ d’où
$$(0,\ldots,0,x_i,0,\ldots,0)\widehat{\times}w_e=V^i\left((x_i,0,\ldots)\right)\widehat{\times}w_e=V^i\left((x_i,0,\ldots)\widehat{\times}F^iw_{e}\right).$$
Or~$F^iw_e=w_{e-i}$. Il suit que~$f_i(T)$ est donnée par~$e_{e-i}(x_i T^{p^i})$  avec la notation précédent la Proposition~\ref{propannexe}. 

Si~$x_i$ est une unité, alors la Proposition~\ref{propannexe} nous apprend que la suite des valeur absolue des coefficients de~$e_{e-i}(x_i T^{p^i})$ a borne supérieure~$\abs{\pi_{e-i}}$.  C’est donc un polynôme modulo~$(\pi_{e-j})$ pour~$i<j\leq e$. Il suit que si~$\abs{x_i}<1$, alors~$e_{e-i}(x_i T^{p^i})$ a rayon de convergence $1/\abs{x_i}>1$ et alors~$e_{e-i}(x_i T^{p^i})$ est un polynôme modulo~$(p)$. Soit~$i_0$ minimal tel que~$\abs{x_{i_0}}=1$. Alors, modulo~$(\pi_{e-i_0})\cdot\sqrt(p)$, les facteurs de~\eqref{unlabel} sont tous des polynômes, mis à part~$g_{i_0}(T)$.

\subparagraph*{Conclusion:} Revenant au cas général~\eqref{unautrelabel}, nous sommes dans le cas du Corollaire~\ref{corosorite} dans le contexte de la Proposition~\ref{Prop6sorite}. La conclusion du Corollaire~\ref{corosorite} nous permet de conclure cette preuve.

\end{proof}

\subsection{Sorite calculatoire}\label{Sorite}
Les deux lemmes suivant résultent de l’estimation classique de la valeur absolue $p$-adique des coefficients de la série~$\exp(T)$. Le premier lemme sert aussi de définition de la notation~\eqref{notationupsilond}.
\begin{lemme}\label{exo1} Pour tout entier~$d\geq0$, le coefficient du terme~$(\pi_0)^{p^d}T^{(p^d)}/p^d!$ de degré degré~$p^d$
de la série~$\exp(\pi_0T)$ est s’écrit~
\begin{equation}\label{notationupsilond}
\frac{{\pi_0}^{p^d} }{p^d!}=\pi_0\cdot \upsilon_d\end{equation}
où~$\upsilon_d$ est une unité dans~$\mathbf{Z}_p$.
\end{lemme}
\begin{lemme}\label{exo2} La série~$e(T):=\exp(\pi_0 T)$ vérifie les congruences
\begin{equation}\label{cong1} e(T)\equiv 1 \pmod{\pi_0},\end{equation}
\begin{equation}\label{cong2} e(T)\equiv 1 + \sum_{d} \pi_0\cdot \upsilon_d T^{(p^d)}\pmod{(\pi_0)^2}.
\end{equation}
\end{lemme}
Ces deux lemmes démontrent le cas~$d=0$ de la proposition ci-dessous.
\begin{prop} \label{propannexe}
Soit~$d\geq 0$ et considérons la série
\begin{equation}\label{piexpannexe}
1+\sum_{i\geq1} a_i T^i=\exp\left(\pi_dT+\pi_{d-1}T^p/p+\ldots+\pi_{0}T^{p^d}/p^d \right).
\end{equation}
Alors~$\abs{a_i}\leq\abs{\pi_d}$ pour tout~$i\geq1$, avec égalité si et seulement si~$i$ est une puissance de~$p$.
\end{prop}
\begin{proof} Nous raisonnons par récurrence sur~$d$. Le cas~$d=0$ a déjà été vu en~\eqref{cong2}. Nous pouvons ici supposer~$R$ métriquement complet, voire que son groupe de valuation est dense. Notons suggestivement~$(\pi_d)^{1+\eps}$ l’idéal produit de l’idéal principal~$(\pi_d)$ par l’idéal maximal de~$R$.

Prenons le cas~$d\geq 1$ et travaillons dans~$R/(\pi_{d-1})^{1+\eps}$. Nous avons~$p=0$ dans~$R/(\pi_{d-1})^{1+\eps}$ vu que~$d\geq1$. D’après~\cite[AC~IX.15, Prop.~5~(51)]{BBKAC8}, le Frobenius (au sens des vecteurs de Witt) agit coefficient par coefficient  sur~$W(R/(\pi_{d-1})^{1+\eps})$ par élévation à la puissance~$p$-ième (le Frobenius de~$R/(\pi_{d-1})^{1+\eps}$). Or nous avons
\begin{equation*} F(w_{d})=w_{d-1}.\end{equation*}
Par hypothèse de récurrence, la série~$\sum_{i\geq0} b_i T^i$ associée à~$w_{d-1}$ a des coefficients~$b_i$ tels que:~$\abs{b_i}\leq\abs{\pi_d}$ pour tout~$i\geq1$, avec égalité si et seulement si~$i$ est une puissance de~$p$.

La construction de la série associée à un vecteur de Witt sur~$R/(\pi_{d-1})^{1+\eps}$ est une opération algébrique sur~$R/(\pi_{d-1})^{1+\eps}$: c’est manifeste en termes des coordonnéées de Witt universelles
$$ (w_n)_{n\geq1}\mapsto \sum_{i\geq0}c_iT^i=\prod_{n\geq0}(1-w_nT^n).$$
Cette opération est équivariante pour l’endomorphisme~$x\mapsto x^p$, au sens de l’application:~$({w_n}^p)_{n\geq1}\pmod{p}\mapsto\sum_{i\geq0}{c_i}^pT^i\pmod{p}$. Comme~$p=0$ dans~$R/(\pi_{d-1})^{1+\eps}$, nous l’identité~$F(w_{d})=w_{d-1}$ donne~${a_i}^p=b_i$ dans~$R/(\pi_{d-1})^{1+\eps}$, indice par indice. Rappelons que nous avons la congruence~${\pi_d}^p\equiv \pi_{d-1}\pmod{p}$, d’où l’identité d’idéaux~$(\pi_{d-1})^{1+\eps}={((\pi_{d})^{1+\eps})}^p$. Donc l’inégalité~$\abs{a_i}\leq \abs{\pi_d}$ (resp.~$\abs{b_i}<\abs{\pi_{d-1}}$) équivaut à~$\abs{a_i}\leq \abs{\pi_d}$ (resp.~$\abs{b_i}<\abs{\pi_{d-1}}$).

% a nullité de est~$b_i$ modulo~$(\pi_{d-1})^{1+\eps}$ équivaut à celle de~$b_i$ modulo~$(\pi_{d})^{1+\eps}$. Ce qui permet de conclure la récurrence.
\end{proof}
Remarquons que cet énoncé montre en particulier la congruence
\begin{equation}\label{piexpannexe}
\exp\left(\pi_dT+\pi_{d-1}T^p/p+\ldots+\pi_{0}T^{p^d}/p^d \right)\equiv 1 \pmod{\pi_d}
\end{equation}
et plus particulièrement encore l’intégralité de cette série.

Pour tout entier~$d\geq0$, notons~$e_d$ la série de~\eqref{piexpannexe}. Généralisons les estimations précédentes au cas de produits des séries étudiées.
\begin{prop}\label{Prop6sorite}
 Soient~$\left(u_m\right)_{m\geq1,\,p\nmid m}$ une famille dans~$R$ et~$\left(d_m\right)_{m\geq1,\,p\nmid m}$ une famille d’entiers, de borne supérieure~$d$. Alors la série
$$1+\sum_{i\geq1} b_i T^i=\prod_{m\geq 1,\,p\nmid m}e_{d_m}(u_m T^{m})$$
est telle que~$\abs{b_i}\leq\abs{\pi_d}$ pour tout~$i\geq1$, avec égalité si et seulement si~$i$ est de la forme~$mp^n$ avec~$p\nmid m$, avec~$\abs{u_m}=1$ et avec~$d_m=d$.
\end{prop}
\begin{proof}On se place dans~$R/(\pi_d)^2$. Réécrivons le produit grâce à la Proposition~\ref{propannexe},
 en utilisant le symbole~$\ast_{n,m}$ pour désigner des unités de~$R$ que l’on ne souhaite pas expliciter,
$$\prod_{i\geq 0,\,p\nmid m}e_{d_m}(u_m T^{m})
\equiv
\prod_{i\geq 0,\,p\nmid m}\left(1+\pi_{d_m}\sum_{n\geq0} \ast_{n,m} {u_mT^m}^{p^n}\right)\pmod{(\pi_{d})^2}.$$
Il suffit de développer le produit de droite. Les termes faisant apparaître au moins deux fois un facteur~$\pi_d$, ou une fois un facteur~$\pi_{d^\prime}$ avec~$d\leq d^\prime$, sont congrus à~$0$ modulo~$(\pi_d)^2$. Il ne reste que
$$1+\sum_{i\geq1} b_i T^i=1+\sum_{m\geq 0,~p\nmid m, d_m=d}\pi_d\sum_{n\geq0} \ast_{n,m} \left(u_mT^m\right)^{p^n} \pmod{(\pi_d)^2}.$$
Il s’ensuit une formule directe pour les coefficients~$b_i$ modulo~$(\pi_d)^2$. L’énoncé en découle immédiatement.
\end{proof}
\begin{coro}\label{corosorite}
 Soit~$P(T)$ un polynôme dans~$R[T]$ et soit~$\eps>0$ tel que~$P\equiv 1\pmod{(\pi_d)^\eps}$. Alors la série
$$1+\sum_{i\geq1} c_i T^i=P(T)\cdot\left(1+\sum_{i\geq1} b_i T^i\right)$$
est telle que~$\abs{c_i}\leq\abs{\pi_d}$ pour tout~$i>\deg(P)$, avec égalité, pour de tels indices~$i$, si et seulement si~$i$ est de la forme~$mp^n$ avec~$p\nmid m$, avec~$\abs{u_m}=1$ et avec~$d_m=d$.
\end{coro}
\begin{proof}On développe le produit et montre comme précédemment que
$$1+\sum_{i\geq1} c_i T^i=P(T)+\sum_{m\geq 0,~p\nmid m, d_m=d}\pi_d\sum_{n\geq0} \ast_{n,m} \left(u_mT^m\right)^{p^n} \pmod{(\pi_d)^{1+\eps}}.$$
\end{proof}
\begin{remark}[Application du Corollaire] Les estimations que nous venons d’établir peuvent permettre de donner un nouvel algorithme « naïf » de calcul de rayon de convergence. Soit~$K$ est une extension de~$\QQ_p$ d’indice de ramification~$e$ fini, et~$P(T)$ un polynôme de~$K[T]$ de degré~$D$ tel que~$P(0)=0$. Alors il est possible de donner, en termes de~$D$ et de~$e$ un ensemble fini~$F(e,D)$ d’indices tel que pour tout polynôme de degré au plus~$D$ de~$K[T]$ tel que~$P(0)=0$, le rayon de convergence de~$\exp(P(T))=\sum_{i\geq0}a_iT^i$ est déterminé directement par la famille des valeurs absolues~$(\abs{a_f})_{f\in F(e,D)}$.
\end{remark}
\section{Sur les paramètres de Pulita et les groupes de Lubin-Tate}
Jusqu’à présent, nous nous sommes placé dans un cadre un peu plus restreint que celui de~\cite{Pulita}. La théorie de Pulita permet de considérer des paramètres~$\pi_i$ obtenus en fixant une loi de groupe de Lubin-Tate isomorphe à la loi multiplicative (\cite[§2.1 (2.1) p.509, Définition 2.2 et Remarque~2.3 p.510]{Pulita}, cf.~\ref{parametresPulita}~\emph{infra}). Du point de vue de~\cite{Pulita}, nous nous sommes restreint au cas
\begin{equation}\label{casmult}\pi_i=\zeta_i-1\end{equation} (cf. le début de la section~\ref{sectionpreuve} et celui de l’annexe~\ref{exemples}), ce qui correspond à la loi multiplicative
\begin{equation}\label{loimult} F_m(X,Y)=(X+1)(Y+1)-1=XY+X+Y\end{equation}
et au polynôme de Lubin-Tate~$P_m(X)=(X+1)^p-1$.

Dans cet annexe, nous montrons comment généraliser tous nos résultats dans le contexte de~\cite{Pulita}. Commençons par rappeler ce contexte.
\subsection{Paramètres de Pulita}\label{parametresPulita} Suivant~\cite{Pulita}, soit~$F(X,Y)$ une loi de groupe formel de Lubin-Tate définie sur~$\QQ_p$, supposée isomorphe à la loi multiplicative~\eqref{loimult}. Il existe un polynôme~$P(T)$ de la forme~$pT\pmod{T^2}$ et~$T^p\pmod{p}$ qui définit la multiplication par~$p$ au sens de la loi~$F$.

Nous choisissons une suite~$(\pi^F_i)_{i\geq0}$ telle que~$P(\pi^F_{i+1})=\pi^F_i$, que~$P(\pi^F_0)=0$ et~$\pi_0\neq0$.

Par hypothèse il existe un isomorphisme de~$F_m$ vers~$F$. Il s’agit d’une série~$\varphi(X)\in T\QQ_p[[T]]$ telle que~$\varphi(F_m(X,Y))=F(\varphi(X),\varphi(Y))$. Si~$\exp_F(T)$ est la série exponentielle de la loi~$F$, nous avons~$\varphi=\exp_F\circ\log$. Notons que la série~$\varphi$ a nécessairement tous ses coefficients dans~$\mathbf{Z}
_p$.

Notons~$\psi$ l’isomorphisme réciproque de~$F$ vers~$F_m$. Nous avons~$\psi=\stackrel{-1}{\varphi}=\exp\circ\log_F$ où~$\log_F$ est le logarithme\footnote{La série~$\log_F(T)$ est donnée explicitement par~$\log_F(T)=\lim_{n\to\infty}\left({P_F}^{\circ n}(T)/p^n)\right)$, où l’exposant~$\circ n$ dénote~$n$ compositions successives:~${P_F}^{\circ n}(t):=P_F(P_F(\cdots P_F(T)\cdots))$.}
 de la loi~$F$.
 
 Alors les nombres~${\pi^m_i}:=\psi(\pi^F_i)$, pour~$i\geq0$, sont tels que~$\zeta_i:=1+{\pi^m_i}$ est une racine de l’unité d’ordre~$p^{1+i}$. La suite des~$\zeta_i$ vérifie également la propriété de compatibilité~$\left(\zeta_{i+1}\right)^p=\zeta_i$. Autrement dit, en posant~$\pi_i=\pi^m_i$, on se trouve dans le cas~\eqref{casmult}.

Le lemme suivant donne des approximations polynomiales de la série~$\psi$.
\begin{lemme}\label{lm} Soit~$\epsilon>0$. Alors il existe un polynôme~$\Phi(X)\in\mathbf{Z}[X]$, de terme constant non nul, tel que
\begin{equation}\label{eqnlm} \forall0\leq i\leq d, \abs{\Phi(\pi^m_i)-\pi^F_i}\leq\epsilon.\end{equation}
\end{lemme}
\begin{proof} Cherchons tout d’abord~$\Phi$ à coefficients dans~$\mathbf{Z}_p$. Pour cela considérons, pour tout entier~$M$, la troncation~$\phi^{[M]}$ de la série~$\phi$ jusqu’à l’ordre~$M$ exclu. Alors la queue~$\phi-\phi^{[M]}$ de la série est dans~$X^M\mathbf{Z}_p[X]$. Appliquons en l’entier ultramétrique~$\pi^M_i$, ce qui donne la majoration
$$\phi(\pi^m_i)-\phi^{[M]}(\pi^m_i)\leq \abs{\pi^m_i}^M=\abs{p}^{M/(p^i(p-1))}.$$
Nous retrouvons~\eqref{eqnlm} dès que~$M$ est suffisamment grand.
Fixons un tel~$M$.

Pour obtenir~$\Phi$ à coefficients entiers, il suffit de le construire en approchant chaque coefficient de~$\Phi^{[M]}$ à la précision~$\eps/2$ par un entier.

\end{proof}

\subsection{}
La méthode que nous avons employée et nos démonstrations s’adaptent sans changements, à l’exception ci-dessous près, en remplaçant~$\zeta_i-1$ par~$\pi_i$.

\begin{prop}\label{propintegralitePulita} Soit~$d\geq0$. La série
\begin{equation}\label{integralitePulita}\exp\left(\pi^F_dT+\pi^F_{d-1}T^p/p+\ldots+\pi^F_0T^{p^d}/p^d\right)\end{equation}
est à coefficients dans~$\mathbf{Z}_{(p)}[\pi^F_d,\pi^F_{d-1},\ldots,\pi^F_0]$.
\end{prop}
Dans le cas~\eqref{casmult}, cette proposition a été obtenue au numéro~\ref{numeromatsuda}, dans lequel nous nous sommes référé à l’argument immédiat de~\cite[Lemme~1.5]{Matsuda}, qui se base sur les propriétés d’intégralité de l’exponentielle d’Artin-Hasse.

Dans son contexte plus général,~\cite{Pulita} utilise un autre type d’argument, l’«~astuce~»~\cite[§2.1, cf. Lemma~2.1, Remark~2.1]{Pulita}. Son résultat~\cite[Lemme~2.1, comme dans Remarque~2.3]{Pulita} est même plus général que notre énoncé. Nous nous proposons dans cette annexe de retrouver la propriété d’intégralité de~\eqref{integralitePulita} à partir de celle du cas~\eqref{casmult} déjà couvert.

D’une part cela permet d’étendre les résultats de ce document au contexte plus général de~\cite{Pulita}. En outre, notre démonstration est constructive et permet de rendre explicite
%\footnote{au sens des formules~\eqref{} et~\eqref{FormuleMatsuda}.}
 l’intégralité cherchée.

%\subsection{}
\begin{proof}[Démonstration de la Proposition~\ref{propintegralitePulita}]
La série~\eqref{integralitePulita} a manifestement tous ses coefficients dans~$\QQ(\pi^F_d,\pi^F_{d-1},\ldots,\pi^F_0)$. En outre
$$\QQ(\pi^F_d,\pi^F_{d-1},\ldots,\pi^F_0)\cap\mathbf{Z}_p[\pi^F_d,\pi^F_{d-1},\ldots,\pi^F_0]=\mathbf{Z}_{(p)}[\pi^F_d,\pi^F_{d-1},\ldots,\pi^F_0].$$
Il suffit donc de montrer que les coefficients de~\eqref{integralitePulita} sont dans~$\mathbf{Z}_p[\pi^F_d,\pi^F_{d-1},\ldots,\pi^F_0]$.

Notons que~$\mathbf{Z}_p[\pi^F_d,\pi^F_{d-1},\ldots,\pi^F_0]$ est l’anneau d’entier de l’extension ramifiée associée au groupe de Lubin-Tate~$F$. Cet anneau s’écrit encore~$\mathbf{Z}_p[\pi^F_d]$, et même~$\mathbf{Z}_p[\pi^m_d]$ car il ne dépends de la loi~$F$ qu’à isomorphisme près.

tout revient à montrer que la série~\eqref{integralitePulita} appartient à~$\Lambda(\mathbf{Z}_p[\pi_d^m])$. D’après ce qui précède, cet série appartient à~~$\Lambda(\mathbf{Q}_p[\pi_d^m])$. Ses composantes fantômes s’écrivent
$$(\pi^F_d,\ldots,\pi^F_0,0,0,\ldots).$$
D’après le Lemme~\ref{lm}, il existe un polynôme à coefficients entiers et terme constant nul~$\Phi$ tel que
\begin{equation}\label{condition}
\forall 0\leq i\leq d, \abs{\Phi(\pi^m_i)-\pi^F_i}\leq \abs{p}^*.\end{equation}
Soit~$\Phi(AH(v_d))$ l’évaluation de~$\Phi$ dans l’anneau~$\Lambda(\mathbf{Z}_p[\pi_d^m])$, appliqué à la série~$AH(v_d)$. Alors~$\Phi(AH(v_d))$ est un élément de~$\Lambda(\mathbf{Z}_p[\pi_d^m])$. En outre, la série quotient
$$
\frac{\exp\left(\pi^F_dT+\pi^F_{d-1}T^p/p+\ldots+\pi^F_0T^{p^d}/p^d\right)}
{\Phi(AH(v_d))}
$$
a composantes fantômes
\begin{equation}\label{eqn}
(\phi_0,\phi_1,\ldots):=(\pi^F_d,\ldots,\pi^F_0,0,0,\ldots)-(\Phi(\pi^m_d),\ldots,\Phi(\pi^m_0),\Phi(0),\Phi(0),\ldots) \end{equation}
Comme~$\Phi$ est sans terme constant, on obtient la formule~$\forall i>d, \phi_i=0)$.

La série correspondant à~\eqref{eqn} est
\begin{equation}\label{expo}
\exp(\phi_0T+\phi_1T^p/p+\ldots+\phi_dT^{p^d}/p^d).\end{equation}
D’après~\eqref{condition}, l’argument de l’exponentielle dans~\eqref{expo}  est borné, en norme de Gauß, par le rayon de convergence de l’exponentielle.  La série de rayon~\eqref{expo} a donc rayon de convergence au moins~$1$. Elle est donc à coefficients entiers (Théorème~\ref{thm2}). Finalement
$$\exp\left(\pi^F_dT+\pi^F_{d-1}T^p/p+\ldots+\pi^F_0T^{p^d}/p^d\right)
=\Phi(AH(v_d))\cdot\exp(\phi_0T+\phi_1T^p/p+\ldots+\phi_dT^{p^d}/p^d)$$
est produit de série à coefficients entiers, donc est elle-même à coefficients entiers.
\end{proof}

\bibliographystyle{alpha}
\bibliography{crasswitt}

\frenchspacing
%\begin{thebibliography}{7}

%\bibitem{And}   S. Bloch -- K. Kato,
%\textit{$L$-functions and Tamagawa numbers of motives},
% in:  \textit{The Grothendieck Festschrift}, Vol. I,
% Progr. Math. 86, Birkh\"auser, Boston 1990, P. Cartier, et al., eds.,
%pp. 333--400.

%\bibitem{FrQu}
% J. S. Milne, \textit{Etale cohomology},
%Princeton University Press, 1980.

%\bibitem{Jon} F. Cafiero,  \textit{Sui problemi ai limiti relativi ad un'equazione differenziale ordinaria del primo ordine e dipendente da un parametro},
%Rend. Sem. Mat. Univ. Padova,    \textbf{18} (1949), pp. 239--257.

%\bibitem{Luk} M. A. Seveso, \textit{Stark--Hegner points and Selmer groups of abelian varieties},  PhD thesis,
%University of Milan, Federigo Enriques Department of Mathematics, 2009.
%\end{thebibliography}

\end{document}